
\documentclass[12pt]{article}


\usepackage{graphicx}
\usepackage[labelfont=bf,font=sf]{caption}
\usepackage{natbib}
\usepackage{amssymb}
\usepackage{amsmath}
\usepackage{amsthm}
\usepackage{mathtools}
\usepackage[font={sf,small},labelfont=md]{subfig}
\usepackage{transparent}
\usepackage{tikz}
	\usetikzlibrary{trees}
	\tikzstyle{level 1}=[level distance=3cm, sibling distance=3.5cm]
	\tikzstyle{level 2}=[level distance=3cm, sibling distance=2cm]
	\tikzstyle{bag} = [text width=4em, text centered]
	\tikzstyle{end} = [circle, minimum width=3pt,fill, inner sep=0pt]
\usepackage[pscoord]{eso-pic} 
\usepackage{hyperref}
	\hypersetup{colorlinks=true,linkcolor=black,citecolor=black,urlcolor=black}


\addtolength{\textwidth}{1in}
\addtolength{\oddsidemargin}{-0.5in}
\addtolength{\textheight}{0.5in}
\addtolength{\topmargin}{-0.5in}
\renewcommand{\thefootnote}{\fnsymbol{footnote}}
\bibliographystyle{SSSA}
\newtheorem{theorem}{Theorem}
\newtheorem{conjecture}{Conjecture}
\DeclareMathOperator*{\sgn}{\text{sgn}}
\DeclareMathOperator*{\argmin}{\arg\min}
\DeclareMathOperator*{\shrink}{\text{shrink}}

\newcommand{\given}[2]{\left.{#1}\right|#2}

\newcommand{\bigoh}[1]{\ensuremath{\mathcal{O}\left(#1\right)}}
\newcommand{\Ot}{\ensuremath{\bigoh{t}}}
\newcommand{\prob}[1]{\Pr\left(#1\right)}
\newcommand{\E}[1]{E\left[#1\right]}
\newcommand{\tfun}[2]{\beth\left(#1,#2\right)}
\newcommand{\gammafun}[1]{\Gamma\left(#1\right)}


\title{Inverse Modeling of Dynamical Systems: Multi-Dimensional Extensions of a Stochastic Switching Problem\footnote{This research is supported in part by NSA grant H98230-11-1-0222 and NSF grant DMS-1062817.}}
\author{Erik Bates\footnote{Michigan State University, \protect\href{malito:bateser2@msu.edu}{\protect\nolinkurl{bateser2@msu.edu}}} \and Blake Chamberlain\footnote{Susquehanna University, \protect\href{malito:chamberlainb@susqu.edu}{\protect\nolinkurl{chamberlainb@susqu.edu}}} \and Rachel Gettinger\footnote{Saint Vincent College, \protect\href{malito:rachel.gettinger@email.stvincent.edu}{\protect\nolinkurl{rachel.gettinger@email.stvincent.edu}}}}


\begin{document}
\maketitle
\begin{abstract}
The Buridan's ass paradox is characterized by perpetual indecision between two states, which are never attained. When this problem is formulated as a dynamical system, indecision is modeled by a discrete-state Markov process determined by the system's unknown parameters. Interest lies in estimating these parameters from a limited number of observations. We compare estimation methods and examine how well each can be generalized to multi-dimensional extensions of this system. By quantifying statistics such as mean, variance, frequency, and cumulative power, we construct both method of moments type estimators and likelihood-based estimators. We show, however, why these techniques become intractable in higher dimensions, and thus develop a geometric approach to reveal the parameters underlying the Markov process. We also examine the robustness of this method to the presence of noise. 
\end{abstract}
\newpage
\tableofcontents
\newpage
\renewcommand{\thefootnote}{\arabic{footnote}} \setcounter{footnote}{0}


\section{Introduction}
In the classical paradox known as ``Buridan's Ass," first posed in the 14th century by Jean Buridan, an ass is placed midway between a pail of water and a bale of hay. Assuming the donkey prefers the closer object, he theoretically never decides to approach one or the other when placed equidistant between the two \citep{lamport}. This problem can be generalized as a dynamical system in which the donkey is free to roam between the two objects, nevertheless changing his preference before ever actually reaching one or the other. In this formulation, the donkey's indecision is modeled by a discrete-state Markov process, the state being the donkey's current preference. The parameters governing this Markov process are the probabilities of the donkey switching that preference. We begin by exploring the mathematical generalization of the classical, two-state paradox with the goal of estimating these parameters. We consider both method of moments type estimators and likelihood-based estimators to achieve this goal. We also develop an algorithm that allows us to directly estimate the parameters.

We are interested in the behavior of the system in higher dimensions, so we also consider the donkey in a triangular pen. By adding one more state to the system and extending the problem from one to two dimensions, many of the methods we explore for the donkey on a line become intractable. Nevertheless, our algorithm, which detects the donkey's state at each unit time step, still allows us to measure the parameters of the system. In the absence of measurement noise, this state detector gives us the most reasonable values for our parameters. In the presence of noise, though, the state detector performs poorly. Consequently, we employ methods of denoising the system before applying state detection.


\section{Donkey on a Line}
Following the setup of Buridan's Ass, we place the donkey on a line between a pail of water at $x = 0$ and a bale of hay at $x = 1$.  We define state 0 as the state in which the donkey is moving towards the pail of water, and state 1 as the state in which the donkey is moving towards the bale of hay.\footnote{For simplicity, the donkey is always initially placed at $x=0.5$ and is taken to be in state 0.} The donkey slows as he comes closer to either object, so he never actually reaches the water or the hay.  The donkey's movement is modeled by the following differential equations:
\begin{align}
\text{State 0: } \dfrac{dx}{dt} &= v(-x) \label{eq: state0} \\
\text{State 1: } \dfrac{dx}{dt} &= v(1-x). \label{eq: state1}
\end{align}
The constant $v$ controls the speed of the donkey. While the donkey obeys these differential equations in a continuous fashion, we allow him to change state only at unit time steps. We let $\tau_{01}$ be the probability that the donkey switches from state 0 to state 1 at any given time step, and $\tau_{10}$ be the probability that the donkey switches from state 1 to state 0. These are the parameters we estimate in our mathematical model of Buridan's Ass. We assume we know the constant $v$ (even if not known, it is easily measured), and we observe only the donkey's position at each of finitely many time steps. Notice that the parameters $\tau_{01}$ and $\tau_{10}$ change with neither time nor the donkey's current state, imparting them the independence that characterizes the stochastic switching of this system as a Markov process.


\subsection{Method of Moments Type Estimators}
One way to estimate $\tau_{01}$ and $\tau_{10}$ is the method of moments approach.  The standard method of moments estimation scheme uses moments of measured data, such as mean and variance, to estimate unknown parameters \citep{moments}. The approach relies on deriving algebraic expressions for these moments in terms of the parameters. When these expressions are invertible, they give closed-form estimates of parameters in terms of only measured data. Furthermore, this technique can be used with feature statistics such as frequency and cumulative power. Since the donkey on a line system has two parameters, we require couplets, or pairs of invertible expressions, to estimate $\tau_{01}$ and $\tau_{10}.$ 


\subsubsection{The Markov Process} \label{Donkey on a Line: The Markov Process}
The probability distribution of the donkey's state at the $n$th time step is given by the discrete-state Markov chain
\begin{align}
p_{n}&=Ap_{n-1}, \label{eq: markov}
\end{align}
where 
\begin{align}
A=\begin{bmatrix}
1-\tau_{01}&\tau_{10} \\
\tau_{01}&1-\tau_{10}
\end{bmatrix}
\end{align}
and $p_0$ is a vector whose entries are the initial probabilities of the donkey being in either state. Notice that $A$ is a column stochastic matrix (one whose columns each have entries summing to 1). Any such matrix has an eigenvector associated with an eigenvalue of 1 (see \hyperref[app: eigenvector]{Appendix \ref*{app: eigenvector}}). It is a well-known fact that under certain conditions\footnote{The Markov chain must be recurrent, aperiodic, and irreducible \citep{ergodic}. A recurrent Markov chain is one in which any state, once reached, will with probability 1 be reached again. Aperiodicity is the condition that for each state $i$, $\gcd\{n: \text{state }i \text{ can be returned to in } n \text{ steps}\} = 1.$ Finally, an irreducible chain is one in which any state can somehow be reached from any other state.}, this eigenvector is unique\footnote{Up to multiplication by a constant. For the eigenvector $\mathbf{v}$ to make sense as a probability distribution, it must be normalized so that its entries sum to 1.} and represents a stable distribution to which any initial distribution vector will converge after repeated applications of \eqref{eq: markov}. For $A,$ this eigenvector is
\begin{align}
\mathbf{v} = \begin{bmatrix}
\dfrac{\tau_{10}}{\tau_{01}+\tau_{10}} \\[0.4cm] 
\dfrac{\tau_{01}}{\tau_{01}+\tau_{10}}
\end{bmatrix}.
\end{align}
This vector shows that the expected proportion of time the donkey spends in state  0 is $\frac{\tau_{10}}{\tau_{01}+\tau_{10}}$, and the expected proportion of time he spends in state 1 is $\frac{\tau_{01}}{\tau_{01}+\tau_{10}}.$  These proportions motivate the conditional probability tree in \autoref{fig: tree}.
\begin{figure}[h]
\centering
\begin{tikzpicture}[grow=right]
\node[bag] {Donkey}
    child {
        node[bag] {State $1$}        
            child {
                node[end, label=right:
                    {State 1 \ $\displaystyle\frac{\left(1-\tau_{10}\right)\tau_{01}}{\tau_{01}+\tau_{10}}$}] {}
                edge from parent
                node[above] {}
                node[below]  {$\phantom{\frac{0}{0}}1-\tau_{10}\phantom{000}$}
            }
            child {
                node[end, label=right:
                    {State 0 \ $\displaystyle\frac{\tau_{01}\tau_{10}}{\tau_{01}+\tau_{10}}$}] {}
                edge from parent
                node[above] {$\tau_{10}$}
                node[below]  {}
            }
            edge from parent 
            node[above] {}
            node[below]  {$\displaystyle\frac{\tau_{01}}{\tau_{01}+\tau_{10}}\phantom{00w000}$}
    }
    child {
        node[bag] {State $0$}        
        child {
                node[end, label=right:
                    {State 1 \ $\displaystyle\frac{\tau_{01}\tau_{10}}{\tau_{01}+\tau_{10}}$}] {}
                edge from parent
                node[above] {}
                node[below]  {$\tau_{01}$}
            }
            child {
                node[end, label=right:
                    {State 0 \ $\displaystyle\frac{\left(1-\tau_{01}\right)\tau_{10}}{\tau_{01}+\tau_{10}}$}] {}
                edge from parent
                node[above] {$1-\tau_{01} \phantom{\frac{01}{0}}$}
                node[below]  {}
            }
        edge from parent         
            node[above] {$\displaystyle\frac{\tau_{10}}{\tau_{01}+\tau_{10}}\phantom{\frac{1}{100000}}$}
            node[below]  {}
    };
\end{tikzpicture}
\caption{Conditional probability tree for donkey on a line}
\label{fig: tree}
\end{figure}
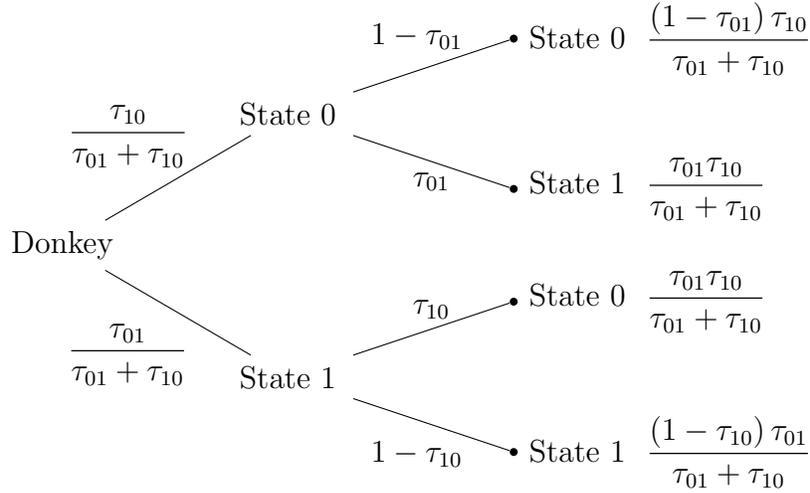

The first two branches of the tree show the marginal probabilities of the donkey being in either state, as given by the eigenvector $\mathbf{v}.$ The second layer of branches identifies the conditional probabilities of the donkey staying in his current state or transitioning to the other. These probabilities come simply from the problem's construction. By the independence of the Markov chain, we can multiply connected branches to produce the final, joint probabilities of the tree. For example, the probability of the donkey being in state 0 and then switching to state 1 is $\frac{\tau_{01}\tau_{10}}{\tau_{01}+\tau_{10}}$.  Notice this is equal to the probability of the donkey being in state 1 and then switching to state 0.  Therefore, we define our frequency of transition as 
\begin{align}
\omega = \frac{\tau_{01}\tau_{10}}{\tau_{01}+\tau_{10}}. \label{eq: frequency}
\end{align} 
We may think of $\omega$ as the probability of observing some state transition at a given time step. This is our first statistic for use in a method of moments couplet.


\subsubsection{Continuous Dynamics}
Although the donkey's state is discrete, his position is continuous in both space and time.  We let $P(x,t)$ be the probability density of the donkey's position, $x$, at time $t.$ Note that for fixed $t$, $P(x,t)$ is a probability density of one variable. Furthermore, since the donkey cannot simultaneously be in both states, we can express $P$ as the sum of state-dependent, conditional probabilities,\footnote{By referring to $P_0$ and $P_1$ as probabilities, we do not mean that their integrals over their support equal 1, but rather that they sum to a valid probability density function. If properly normalized, $P_0$ and $P_1$ would be the probability densities of the donkey's position given his being in state 0 or state 1, respectively.} $P_0(x,t)$ and $P_1(x,t)$:
\begin{align}
P(x,t) = P_0(x,t) + P_1(x,t).
\end{align}
Over time, the probability associated with state 0 is advected towards 0 and the probability associated with state 1 is advected towards 1. Probability is also transferred among states by the switching probabilities, $\tau_{01}$ and $\tau_{10}$. These changes in probability density are captured by the conservation conditions
\begin{align}
\frac{\partial{P_{0}}}{\partial{t}} &= -\frac{\partial}{\partial{x}}\left[v(-x)P_{0}(x,t)\right] - \tau_{01}{P}_{0}(x,t) + \tau_{10}{P}_{1}(x,t) \label{eq: cc1}\\
\frac{\partial{P_{1}}}{\partial{t}} &= -\frac{\partial}{\partial{x}}\left[v\left(1-x\right)P_{1}(x,t)\right] + \tau_{01}{P}_{0}(x,t) - \tau_{10}{P}_{1}(x,t). \label{eq: cc2}
\end{align}

We are interested in the long term behavior of the system, and thus we look for time-invariant steady-state solutions. That is, we examine the case in which
\begin{align}
\frac{\partial{P_{0}}}{\partial{t}} = \frac{\partial{P_{1}}}{\partial{t}} = 0,
\end{align}
and we can, therefore, drop the dependence of $P_0$ and $P_1$ on $t.$ Furthermore, we anticipate that in a steady-state solution, total fluxes are balanced:
\begin{align}
v(-x)P_0(x) &= v(1-x)P_1(x) \\
P_0(x) &= \frac{1-x}{x}P_1(x). \label{eq: balance}
\end{align}
The steady-state condition in state 1 is now
\begin{align}
0 &= 	\dfrac{\partial{P_{1}}}{\partial{t}} \\
&= -\dfrac{\partial}{\partial{x}}\left[v\left(1-x\right)P_{1}(x)\right] + \tau_{01}\frac{1-x}{x}P_{1}(x) - \tau_{10}{P}_{1}(x). \label{eq: prederiv}
\end{align}
Applying the derivative operator in \eqref{eq: prederiv} and rearranging terms, we have
\begin{align}
 -v\left(1-x\right)\dfrac{\partial{P_{1}}}{\partial{x}} + vP_{1}(x) &=-\tau_{01}\frac{1-x}{x}P_{1}(x) + \tau_{10}{P}_{1}(x)\\
	 v\left(1-x\right)\dfrac{\partial{P_{1}}}{\partial{x}} &=\tau_{01}\frac{1-x}{x}P_{1}(x) - \tau_{10}{P}_{1}(x) + vP_{1}(x)\\
	 \dfrac{\partial{P_{1}}}{\partial{x}} &=\tau_{01}\frac{1}{vx}P_{1}(x) - \tau_{10}\frac{1}{v\left(1-x\right)}P_{1}(x) + \frac{1}{1-x}P_{1}(x).
\end{align}
This is a separable first-order ordinary differential equation.  Using separation of variables we get 
\begin{align}
\frac{1}{P_{1}(x)}\partial{P_{1}} &= \left[\frac{\tau_{01}}{v}\frac{1}{x} - \frac{\tau_{10}}{v}\frac{1}{1-x} + \frac{1}{\left(1-x\right)}\right]\partial{x} \\
\ln\left(P_{1}(x)\right) &= \frac{\tau_{01}}{v}\ln\left(x\right) + \frac{\tau_{10}}{v}\ln\left(1-x\right) - \ln\left(1-x\right) + K \\
P_{1}(x) &= Cx^{\frac{\tau_{01}}{v}}\left(1-x\right)^{\frac{\tau_{10}}{v}-1}.
\end{align}
Substituting $P_1(x)$ into \eqref{eq: balance}, we find that
\begin{align}
P_0(x) = Cx^{\frac{\tau_{01}}{v}-1}\left(1-x\right)^{\frac{\tau_{10}}{v}}.
\end{align}
We then combine $P_0$ and $P_1$, yielding the probability distribution
\begin{align}
P(x) &= P_{0}(x) + P_{1}(x) \\
&= Cx^{\frac{\tau_{01}}{v}-1}\left(1-x\right)^{\frac{\tau_{10}}{v}-1}. \label{eq: beta}
\end{align}
Thus, the constant
\begin{align}
C = \frac{1}{\tfun{\frac{\tau_{01}}{v}}{\frac{\tau_{10}}{v}}},
\end{align}
where
\begin{align}
\tfun{\frac{\tau_{01}}{v}}{\frac{\tau_{10}}{v}} = {\int_0^1 x^{\frac{\tau_{01}}{v}-1}(1-x)^{\frac{\tau_{10}}{v}-1} \ dx}. 
\end{align}
This makes $P(x)$ a beta probability distribution \citep{johnson} with parameters $\frac{\tau_{01}}{v}$ and $\frac{\tau_{10}}{v}.$  The first two moments of $P(x)$ are thus
\begin{align}
\mu&=\frac{\tau_{01}}{\tau_{01}+\tau_{10}} \label{eq: mean}
\end{align}
and
\begin{align}
	\sigma^2&=\frac{\tau_{01}\tau_{10}}{\left(\tau_{01}+\tau_{10}\right)^2\left(\frac{\tau_{01}}{v}+\frac{\tau_{10}}{v}+1\right)}. \label{eq: variance}
\end{align}
The mean, $\mu,$ and the variance, $\sigma^2,$ provide two more statistics with which to form couplets.


\subsubsection{Cumulative Power}
A final statistic we examine is cumulative power. For a twice differentiable signal $m,$ and finite $t$, cumulative power, $F$, is defined as
\begin{align}
F(t) = \int_0^t \bigl(m''(s)\bigr)^2\ ds. \label{eq: cumpow}
\end{align}
Cumulative power has been shown to be $\mathcal{O}(t)$ for any $m$ that is a finite sum of sine and cosine functions. Moreover, the average derivative of $F$ for such an $m$ is pivotal with respect to the frequency and the amplitude of $m$ \citep{keaton}. Thus, measurements of the rate of growth in $F$ (with respect to time) can provide statistics amenable to method of moments type estimators.

In the donkey on a line system, we take $m$ to be the donkey's position, $x$. Although $x$ is not a periodic function, we derive a similar condition to the above for its cumulative power. First, however, we must adjust the definition \eqref{eq: cumpow} to account for the times $t_1,\dots,t_n$ at which $x$ is not twice differentiable. We note that $t_1,\dots,t_n$ are the times at which the donkey changes state. From here on, we redefine the cumulative power of $x$ as
\begin{align}
F(t) = \int_0^{t_1} \bigl(x''(s)\bigr)^2\ ds + \int_{t_1}^{t_2} \bigl(x''(s)\bigr)^2\ ds + \cdots + \int_{t_{n-1}}^{t_n} \bigl(x''(s)\bigr)^2\ ds + \int_{t_{n}}^{t} \bigl(x''(s)\bigr)^2\ ds, \label{eq: cumpow2}
\end{align}
where $x'' = \frac{d^2x}{dt^2}.$ In \hyperref[app: power]{Appendix \ref*{app: power}}, $F$ is shown to be $\Ot$. Anticipating, then, that $F(t)$ can be approximated by a line, we look to obtain another statistic for use in the method of moments approach: the expected value of $\frac{dF}{dt},$ in terms of $\tau_{01}$ and $\tau_{10}.$ In carrying out the calculation, we employ the gamma function $\Gamma: \mathbb{R}^+ \rightarrow \mathbb{R}$ defined\footnote{$\mathbb{R}^+ $denotes the set of positive real numbers. The gamma function can also be defined for complex numbers with positive real part \citep{formulas}.} by 
\begin{align}
\gammafun{a} = \int_0^\infty e^{-t} t^{a-1} dt.
\end{align}
Two useful properties of the gamma function \citep{formulas} are
\begin{align}
\gammafun{a}\gammafun{b} = \tfun{a}{b}\gammafun{a+b} \label{eq: prop1} \\
\gammafun{a+1} = a\gammafun{a}. \label{eq: prop0}
\end{align}
From \eqref{eq: prop0}, we easily obtain the identity
\begin{align}
\gammafun{a+2} &= \left(a^{2}+a\right)\gammafun{a}. \label{eq: prop2}
\end{align}

Now beginning our derivation of the expected value of $\frac{dF}{dt}$, in state 0 we have 
\begin{align}
(x'')^2&=\bigl[(-vx)'\bigr]^{2}=(v^{2}x)^{2}=v^{4}x^{2}, \label{eq: power1}
\end{align}
and in state 1 we have
\begin{align}
(x'')^2&=\Bigl[\bigl(v(1-x)\bigr)'\Bigr]^{2}=\bigl(v^{2}(x-1)\bigr)^{2}=v^{4}(1-x)^{2}. \label{eq: power2}
\end{align}
Let $z$ be the donkey's current state. From \eqref{eq: power1} and \eqref{eq: power2}, it is clear that $\frac{dF}{dt}$ is conditional upon the donkey's current position and state. The expectation for $\frac{dF}{dt}$, therefore, is found by averaging $\frac{dF}{dt}$ over all possible position and state combinations. The likelihood of observing any one of these combinations is conditional upon the parameters $\tau_{01}$ and $\tau_{10}.$ More formally,
\begin{align}
\begin{split} 
\E{\given{\frac{dF}{dt}}{\tau_{01},\tau_{10}}} &=
\int_{0}^{1}\big[v^{4}x^{2}\prob{x | z=0,\tau_{01},\tau_{10}}\prob{z=0 | \tau_{01},\tau_{10}}\\
&\phantom{=\int_0^1\big[}+v^{4}\left(1-x\right)^{2}\prob{x | z=1,\tau_{01},\tau_{10}}\prob{z = 1 | \tau_{01},\tau_{10}}\big] dx. \label{eq: int1}
\end{split}
\end{align}
 
For ease of notation, we denote $S = \E{\given{\frac{dF}{dt}}{\tau_{01},\tau_{10}}}.$ Substituting known probabilities and probability densities, \eqref{eq: int1} becomes
\begin{align}
\begin{split}
S &=v^{4}\Bigg[\int_{0}^{1}x^{2}\frac{1}{\tfun{\frac{\tau_{01}}{v}}{\frac{\tau_{10}}{v}+1}}x^{\frac{\tau_{01}}{v}-1}(1-x)^{\frac{\tau_{10}}{v}}\left(\frac{\tau_{10}}{\tau_{01} + \tau_{10}}\right)dx\\
&\phantom{=v^{4}\Bigg[} + \int_{0}^{1} \left(1-x\right)^{2}\frac{1}{\tfun{\frac{\tau_{01}}{v}+1}{\frac{\tau_{10}}{v}}}x^{\frac{\tau_{01}}{v}}(1-x)^{\frac{\tau_{10}}{v}-1}\left(\frac{\tau_{01}}{\tau_{01} + \tau_{10}}\right)dx\Bigg].
\end{split}
\end{align}
We can multiply by fractions equal to 1:
\begin{align}
\begin{split}
S&=v^{4}\Bigg[\int_{0}^{1}\frac{1}{\tfun{\frac{\tau_{01}}{v}}{\frac{\tau_{10}}{v}+1}}\frac{\tfun{\frac{\tau_{01}}{v}+2}{\frac{\tau_{10}}{v}+1}}{\tfun{\frac{\tau_{01}}{v}+2}{\frac{\tau_{10}}{v}+1}}x^{\frac{\tau_{01}}{v}+1}(1-x)^{\frac{\tau_{10}}{v}}\left(\frac{\tau_{01}}{\tau_{01} + \tau_{10}}\right)dx\\
&\phantom{=v^{4}\Bigg[}+ \int_{0}^{1}\frac{1}{\tfun{\frac{\tau_{01}}{v}+1}{\frac{\tau_{10}}{v}}}\frac{\tfun{\frac{\tau_{01}}{v}+1}{\frac{\tau_{10}}{v}+2}}{\tfun{\frac{\tau_{01}}{v}+1}{\frac{\tau_{10}}{v}+2}}x^{\frac{\tau_{01}}{v}}(1-x)^{\frac{\tau_{10}}{v}+1}\left(\frac{\tau_{01}}{\tau_{01} + \tau_{10}}\right)dx\Bigg].
\end{split} \label{eq: ugly1}
\end{align}
Factoring out certain constants in \eqref{eq: ugly1} we get
\begin{align}
\begin{split}
S &= v^{4}\Bigg[\frac{\tfun{\frac{\tau_{01}}{v}+2}{\frac{\tau_{10}}{v}+1}}{\tfun{\frac{\tau_{01}}{v}}{\frac{\tau_{10}}{v}+1}}\left(\frac{\tau_{10}}{\tau_{01} + \tau_{10}}\right)\int_{0}^{1}\frac{1}{\tfun{\frac{\tau_{01}}{v}+2}{\frac{\tau_{10}}{v}+1}}x^{\frac{\tau_{01}}{v}+1}(1-x)^{\frac{\tau_{10}}{v}}dx\\
&\phantom{=}+ \frac{\tfun{\frac{\tau_{01}}{v}+1}{\frac{\tau_{10}}{v}+2}}{\tfun{\frac{\tau_{01}}{v}+1}{\frac{\tau_{10}}{v}}}\left(\frac{\tau_{01}}{\tau_{01} + \tau_{10}}\right)\int_{0}^{1}\frac{1}{\tfun{\frac{\tau_{01}}{v}+1}{\frac{\tau_{10}}{v}+2}}x^{\frac{\tau_{01}}{v}}(1-x)^{\frac{\tau_{10}}{v}+1}dx\Bigg].
\end{split} \label{eq: ugly2}
\end{align}
Recognizing that the integrals in \eqref{eq: ugly2} are of densities taken over their support and are thus equal to
1, we are left with
\begin{align}
S = v^{4}\Bigg[\frac{\tfun{\frac{\tau_{01}}{v}+2}{\frac{\tau_{10}}{v}+1}}{\tfun{\frac{\tau_{01}}{v}}{\frac{\tau_{10}}{v}+1}}\left(\frac{\tau_{10}}{\tau_{01} + \tau_{10}}\right) + \frac{\tfun{\frac{\tau_{01}}{v}+1}{\frac{\tau_{10}}{v}+2}}{\tfun{\frac{\tau_{01}}{v}+1}{\frac{\tau_{10}}{v}}}\left(\frac{\tau_{01}}{\tau_{01} + \tau_{10}}\right)\Bigg]. \label{eq: ugly3}
\end{align}
Applying \eqref{eq: prop1} to the first term of the right-hand side of \eqref{eq: ugly3} yields
		\begin{align}
		\frac{\beth\left(\frac{\tau_{01}}{v}+2,\frac{\tau_{10}}{v}+1\right)}{\beth\left(\frac{\tau_{01}}{v},\frac{\tau_{10}}{v}+1\right)}\left(\frac{\tau_{10}}{\tau_{01}+\tau_{10}}\right)&=\frac{\Gamma\left(\frac{\tau_{01}}{v}+2\right)\Gamma\left(\frac{\tau_{10}}{v}+1\right)\frac{1}{\Gamma\left(\frac{\tau_{01}}{v}+\frac{\tau_{10}}{v}+3\right)}}{\Gamma\left(\frac{\tau_{01}}{v}\right)\Gamma\left(\frac{\tau_{10}}{v}+1\right)\frac{1}{\Gamma\left(\frac{\tau_{01}}{v}+\frac{\tau_{10}}{v}+1\right)}}\left(\frac{\tau_{10}}{\tau_{01}+\tau_{10}}\right) \\
&=\frac{\Gamma\left(\frac{\tau_{01}}{v}+2\right)\Gamma\left(\frac{\tau_{01}}{v}+\frac{\tau_{10}}{v}+1\right)}{\Gamma\left(\frac{\tau_{01}}{v}\right)\Gamma\left(\frac{\tau_{01}}{v}+\frac{\tau_{10}}{v}+3\right)}\left(\frac{\tau_{10}}{\tau_{01}+\tau_{10}}\right). \label{eq: gammabunch}
		\end{align}
		Now, by applying \eqref{eq: prop2} to \eqref{eq: gammabunch} we have
		\begin{align}
		\frac{\beth\left(\frac{\tau_{01}}{v}+2,\frac{\tau_{10}}{v}+1\right)}{\beth\left(\frac{\tau_{01}}{v},\frac{\tau_{10}}{v}+1\right)}\left(\frac{\tau_{10}}{\tau_{01}+\tau_{10}}\right)&=\frac{\left(\frac{\tau_{01}}{v}\right)^{2}+\frac{\tau_{01}}{v}}{\left(\frac{\tau_{01}}{v}+\frac{\tau_{10}}{v}+1\right)^{2}+\left(\frac{\tau_{01}}{v}+\frac{\tau_{10}}{v}+1\right)}\left(\frac{\tau_{10}}{\tau_{01}+\tau_{10}}\right). \label{eq: term1}
		\end{align}
		A similar computation on the second term of the right-hand side of \eqref{eq: ugly3} yields
		\begin{align}
		\frac{\beth\left(\frac{\tau_{01}}{v}+1,\frac{\tau_{10}}{v}+2\right)}{\beth\left(\frac{\tau_{01}}{v}+1,\frac{\tau_{10}}{v}\right)}\left(\frac{\tau_{01}}{\tau_{01}+\tau_{10}}\right)&=\frac{\left(\frac{\tau_{10}}{v}\right)^{2}+\frac{\tau_{10}}{v}}{\left(\frac{\tau_{01}}{v}+\frac{\tau_{10}}{v}+1\right)^{2}+\left(\frac{\tau_{01}}{v}+\frac{\tau_{10}}{v}+1\right)}\left(\frac{\tau_{01}}{\tau_{01}+\tau_{10}}\right). \label{eq: term2}
		\end{align}
		Now, substituting \eqref{eq: term1} and \eqref{eq: term2} into \eqref{eq: ugly3}, we have
		\begin{align}
		S &=v^{4}\Bigg[\frac{\left(\left(\frac{\tau_{01}}{v}\right)^{2}+\frac{\tau_{01}}{v}\right)\tau_{10}+\left(\left(\frac{\tau_{10}}{v}\right)^{2}+\frac{\tau_{10}}{v}\right)\tau_{01}}{(\tau_{01}+\tau_{10})\left[\left(\frac{\tau_{01}}{v}+\frac{\tau_{10}}{v}+1\right)^2+\left(\frac{\tau_{01}}{v}+\frac{\tau_{10}}{v}+1\right)\right]}\Bigg]. \label{eq: final1}
		\end{align}
		Finally, simplifying \eqref{eq: final1} gives
		\begin{align}
		S &=v^{4}\Bigg[\frac{\frac{\tau_{01}\tau_{10}}{v}\left(\frac{\tau_{01}}{v}+\frac{\tau_{10}}{v}+2\right)}{(\tau_{01}+\tau_{10})\left(\frac{\tau_{01}}{v}+\frac{\tau_{10}}{v}+1\right)\left(\frac{\tau_{01}}{v}+\frac{\tau_{10}}{v}+2\right)}\Bigg] \\
		&=\frac{v^{4}\tau_{01}\tau_{10}}{(\tau_{01}+\tau_{10})(\tau_{01}+\tau_{10}+v)}.
		\end{align}
Therefore, the average slope of the cumulative power of $x$ is 
\begin{align}
S=\frac{v^{4}\tau_{01}\tau_{10}}{(\tau_{01}+\tau_{10})(\tau_{01}+\tau_{10}+v)}, \label{eq: slope1}
\end{align}
giving us our final statistic for use in a couplet.


\subsubsection{Couplet Inversion and Results}
Now we have derived closed-form expressions for mean, variance, frequency, and the average slope of cumulative power, in terms of $\tau_{01}$ and $\tau_{10}.$ The method of moments approach demands, however, that each of these statistics can be measured given the data. For mean and variance, this is clearly possible. Measuring frequency, though, requires accurate detection of state transitions. Finally, cumulative power is perhaps the most difficult to measure, as it requires knowledge of the donkey's state at all time steps. In the following simulations, we ignore these issues as to identify which couplets perform most effectively. That is, we assume we have perfect measurements of the four chosen statistics.

To complete our method of moments approach, we find invertible couplets of our closed-form expressions. Examining the equation for each statistic (\eqref{eq: frequency}, \eqref{eq: mean}, \eqref{eq: variance}, and \eqref{eq: slope1}), we find that each one is symmetric with respect to $\tau_{01}$ and $\tau_{10},$ with the exception of the mean, $\mu.$  This implies that any invertible couplet will include the mean. For example, inverting the equations for mean, $\mu=\frac{\tau_{01}}{\tau_{01}+\tau_{10}}$, and frequency, $\omega=\frac{\tau_{01}\tau_{10}}{\tau_{01}+\tau_{10}}$, we find
\begin{subequations} \label{eq: MF}
\begin{align}
	\tau_{01}&=\frac{\omega}{1-\mu} \label{eq: MFa} \\
	\tau_{10}&=\frac{\omega}{\mu}. \label{eq: MFb}
\end{align}
\end{subequations}
Inverting the equations for mean and variance, $\sigma^2=\frac{\tau_{01}\tau_{10}}{\left(\tau_{01}+\tau_{10}\right)^2\left(\frac{\tau_{01}}{v}+\frac{\tau_{10}}{v}+1\right)}$, we obtain
\begin{subequations} \label{eq: MV}
\begin{align}
	\tau_{01}&=\frac{\mu^{2}v\left(1-\mu\right)}{\sigma^2}-\mu v\\
	\tau_{10}&=\frac{v(\mu-1)\left(\mu^2-\mu+\sigma^2\right)}{\sigma^2}.	
\end{align}
\end{subequations}
Finally, inverting the equations for mean and average cumulative power slope,\\ $S=\frac{v^{4}\tau_{01}\tau_{10}}{(\tau_{01}+\tau_{10})(\tau_{01}+\tau_{10}+v)}$, we find
\begin{subequations} \label{eq: MS}
\begin{align}
	\tau_{01}&=\frac{\mu Sv}{v^4\left(\mu-\mu^2-S\right)}\\
	\tau_{10}&=\frac{Sv\left(\mu-1\right)}{\mu^2v^4-\mu v^4+S}.	
\end{align}
\end{subequations}

Each of these couplet inversions vary in accuracy and precision, depending on the true values of $\tau_{01}$ and $\tau_{10}.$ For instance, \autoref{fig: surfaces} shows how well the mean and frequency couplet \eqref{eq: MF} performs for different pairs of $\tau_{01}$ and $\tau_{10}.$ The surfaces shown were generated by running twenty simulations of donkey on a line, each one for 20,000 time steps, for every pair of parameters. There were 361 total pairs, created by meshing grids of 19 equally spaced points from 0.01 to 0.1. In each diagram, the vertical axis identifies the average absolute error seen in the estimates \eqref{eq: MFa} and \eqref{eq: MFb}. We can see that estimates improve as $\tau_{01}$ and $\tau_{10}$ both approach zero. Similar behavior was observed for the couplets \eqref{eq: MV} and \eqref{eq: MS}.

\begin{figure}[h]
\subfloat[Error in \eqref{eq: MFa}]{
\includegraphics[width=0.47\textwidth]{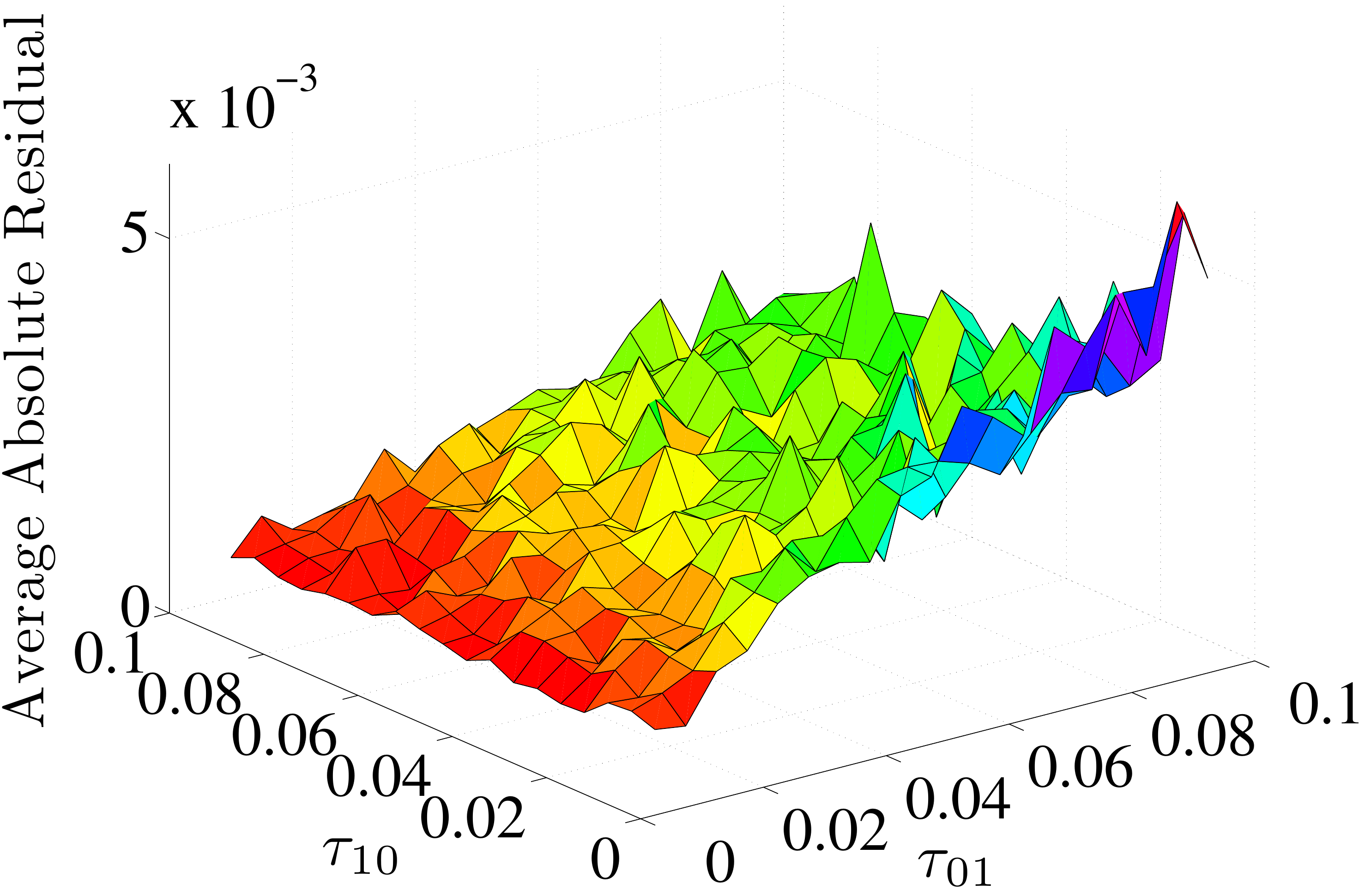}
}
\hfill
\subfloat[Error in \eqref{eq: MFb}]{
\includegraphics[width=0.47\textwidth]{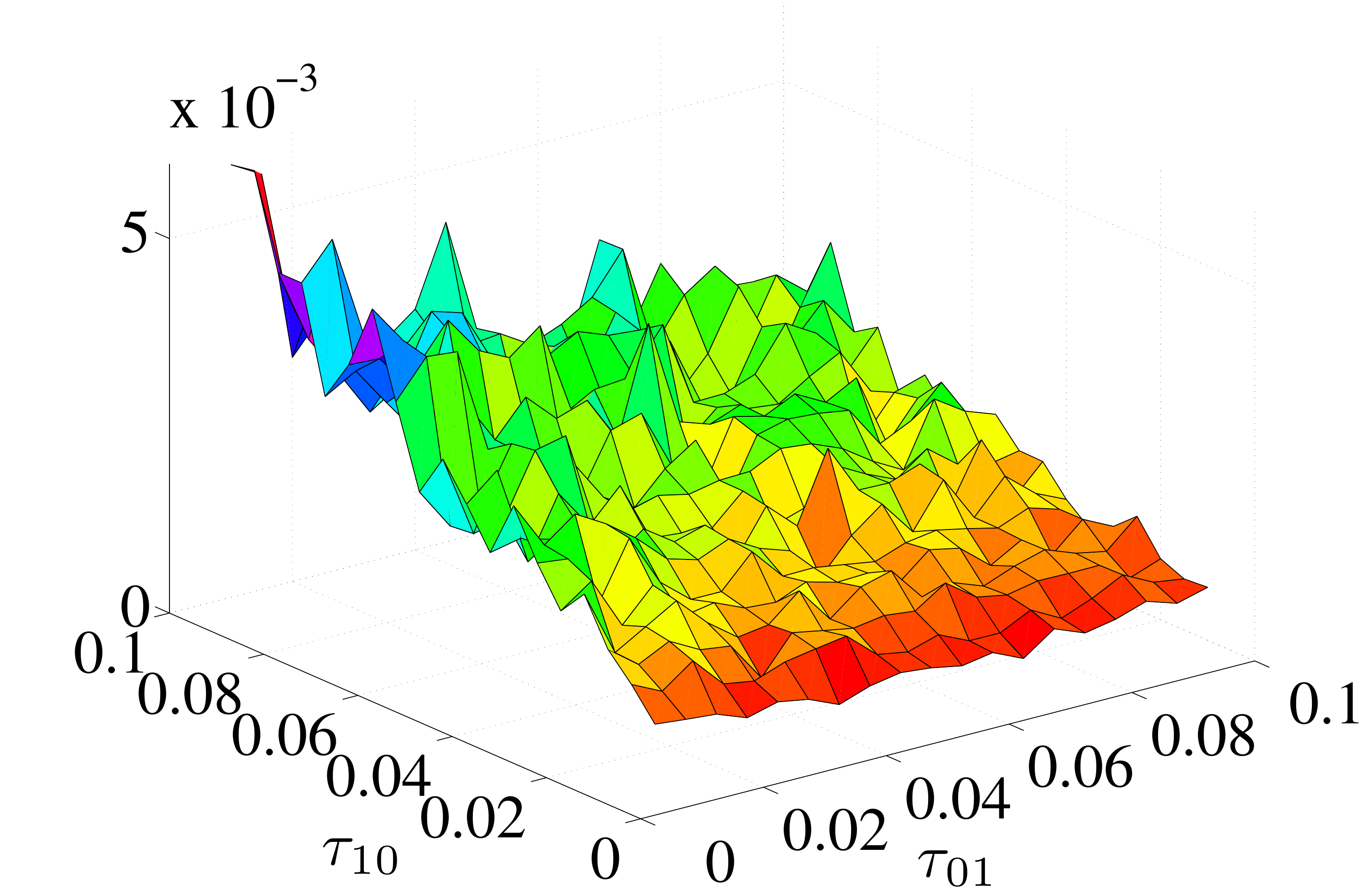}
}
\caption{Average absolute residuals in parameter estimations using the mean and frequency couplet \eqref{eq: MF}}
\label{fig: surfaces}
\end{figure}

\begin{figure}[!ht]
\subfloat[Mean and Variance, $\tau_{01}$]{
\includegraphics[width=0.47\textwidth,clip=true,trim=0 2cm 0 0]{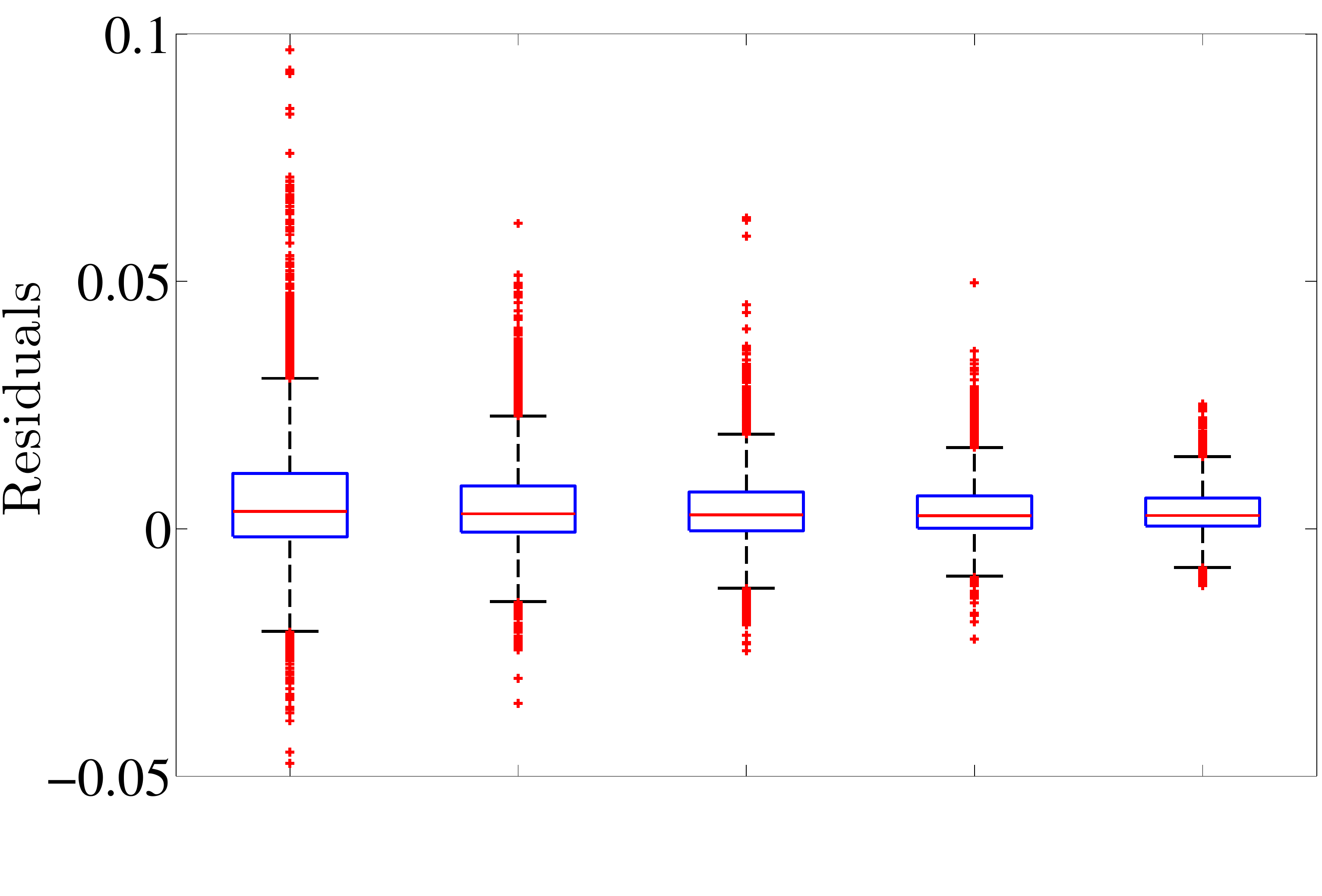}
}
\hfill
\subfloat[Mean and Variance, $\tau_{10}$]{
\includegraphics[width=0.47\textwidth,clip=true,trim=0 2cm 0 0]{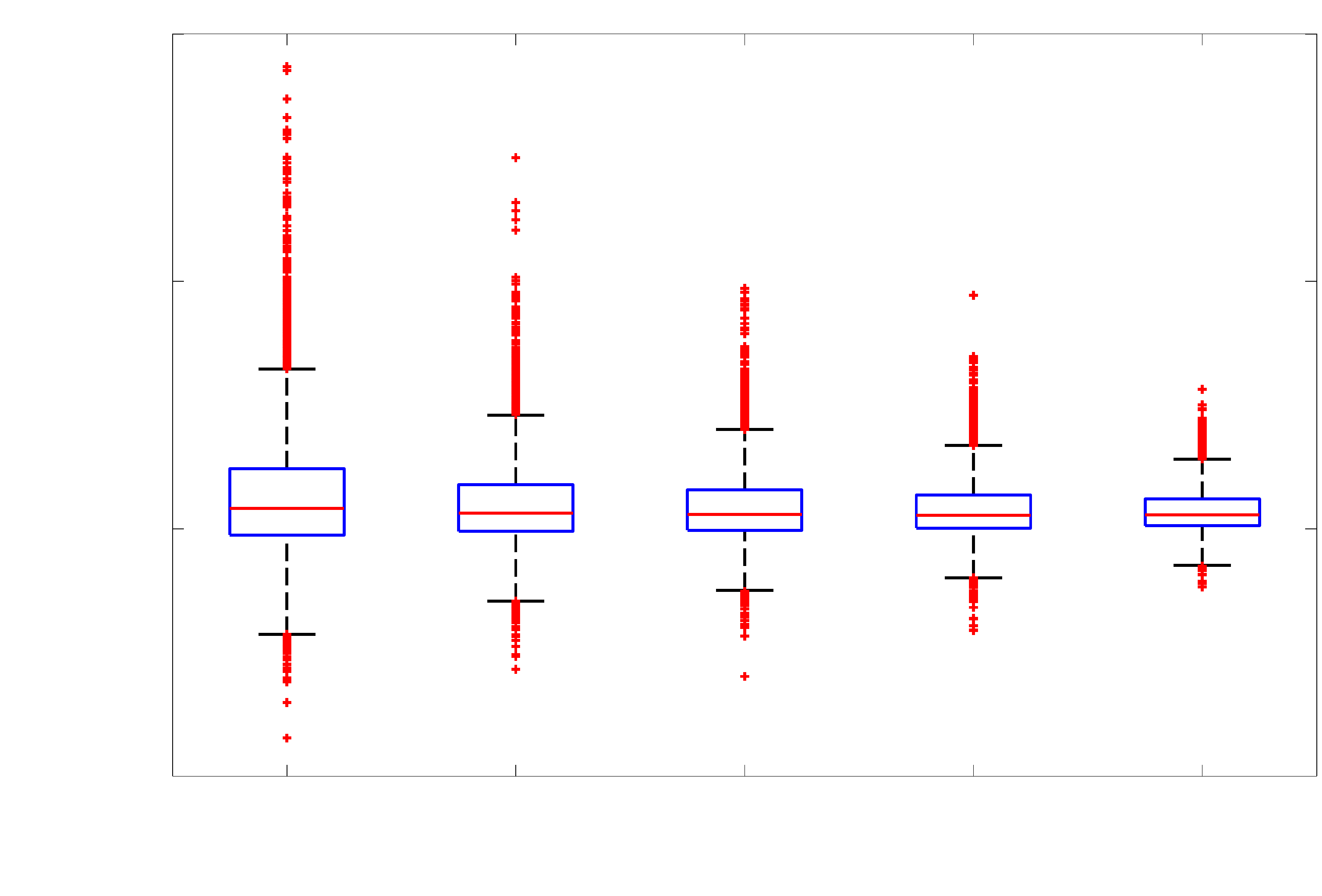}
}
\hfill
\subfloat[Mean and Frequency, $\tau_{01}$]{
\includegraphics[width=0.47\textwidth,clip=true,trim=0 2cm 0 0]{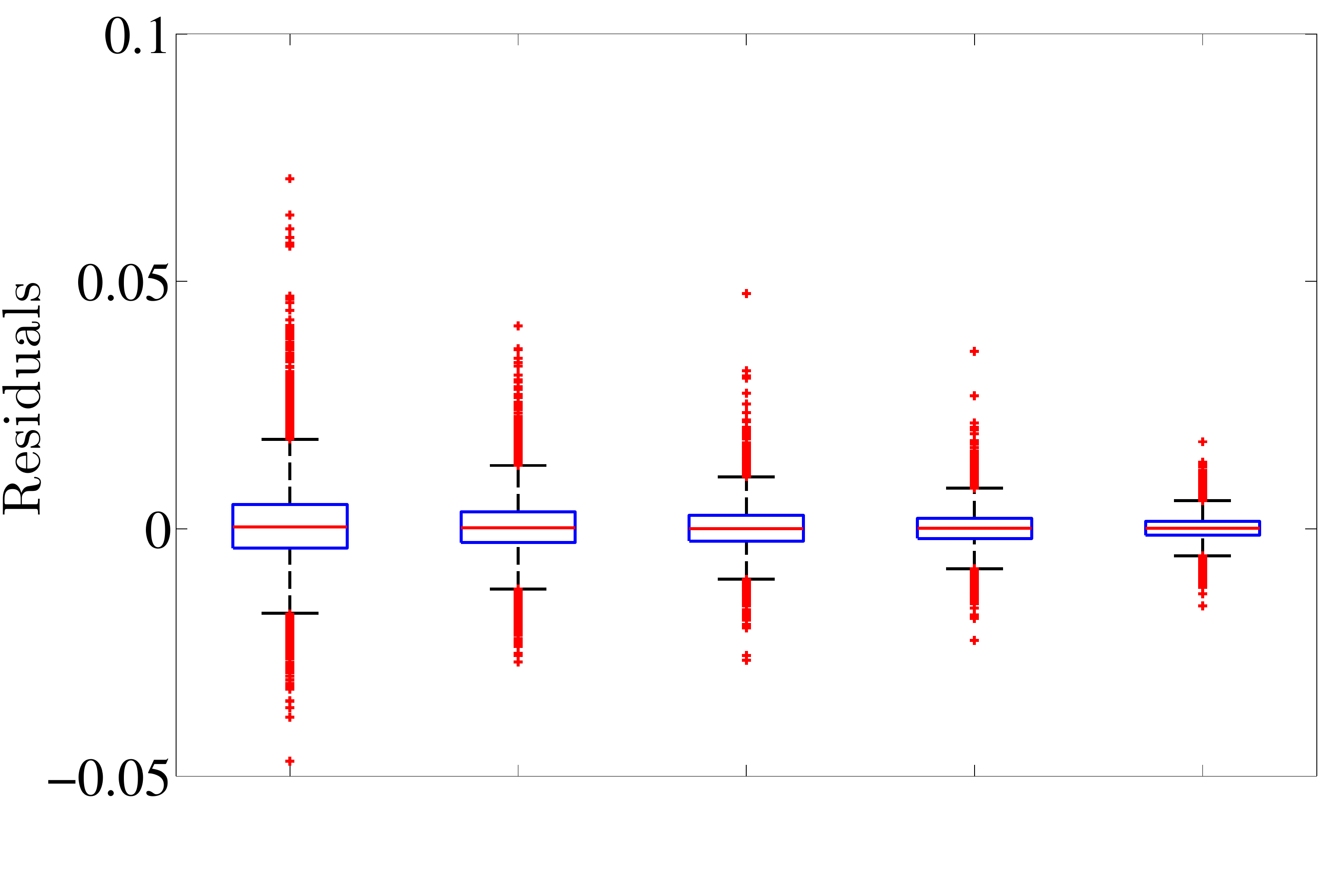}
}
\hfill
\subfloat[Mean and Frequency, $\tau_{10}$]{
\includegraphics[width=0.47\textwidth,clip=true,trim=0 2cm 0 0]{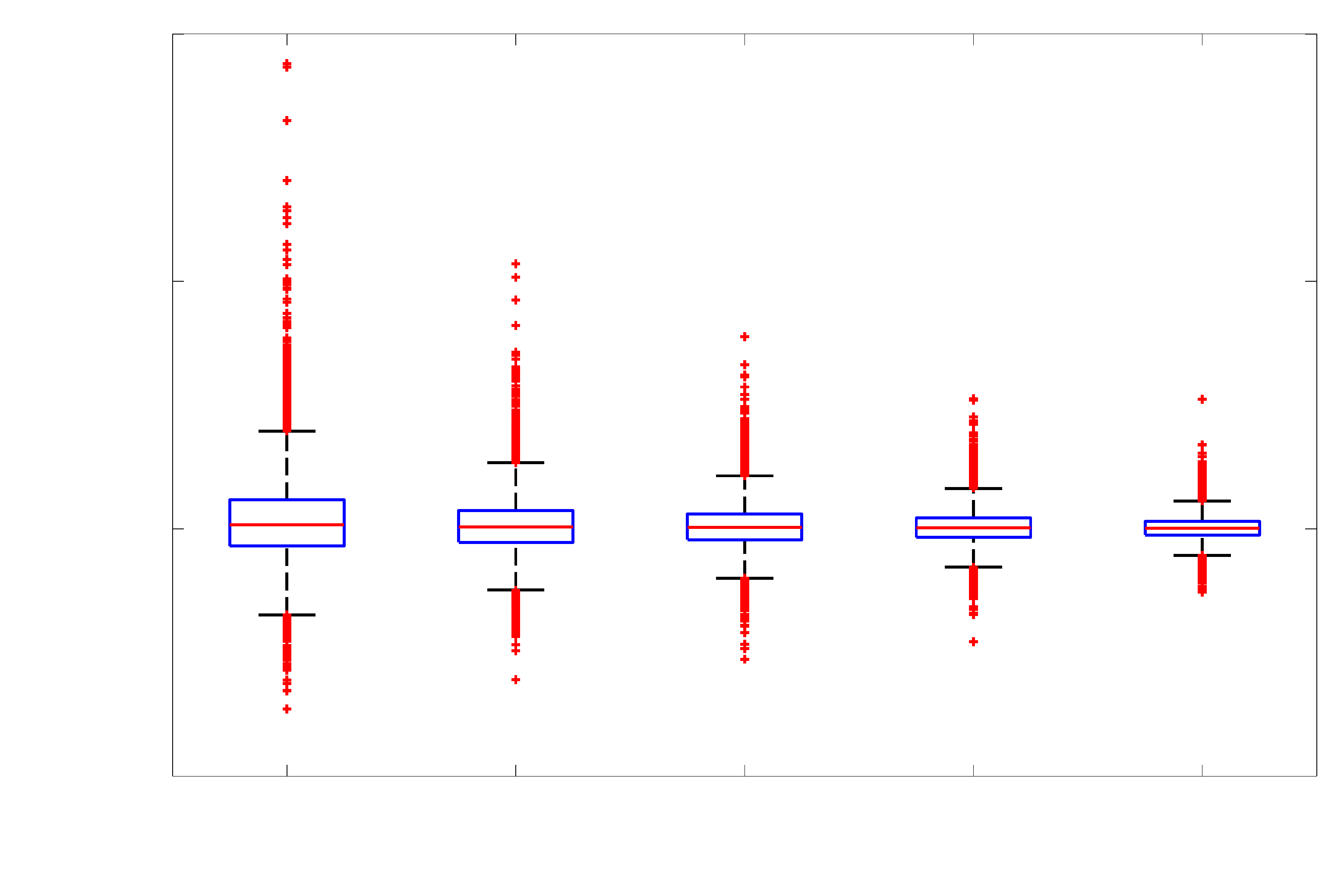}
}
\hfill
\subfloat[\vspace{-\baselineskip} Mean and Cumulative Power, $\tau_{01}$]{
\includegraphics[width=0.47\textwidth]{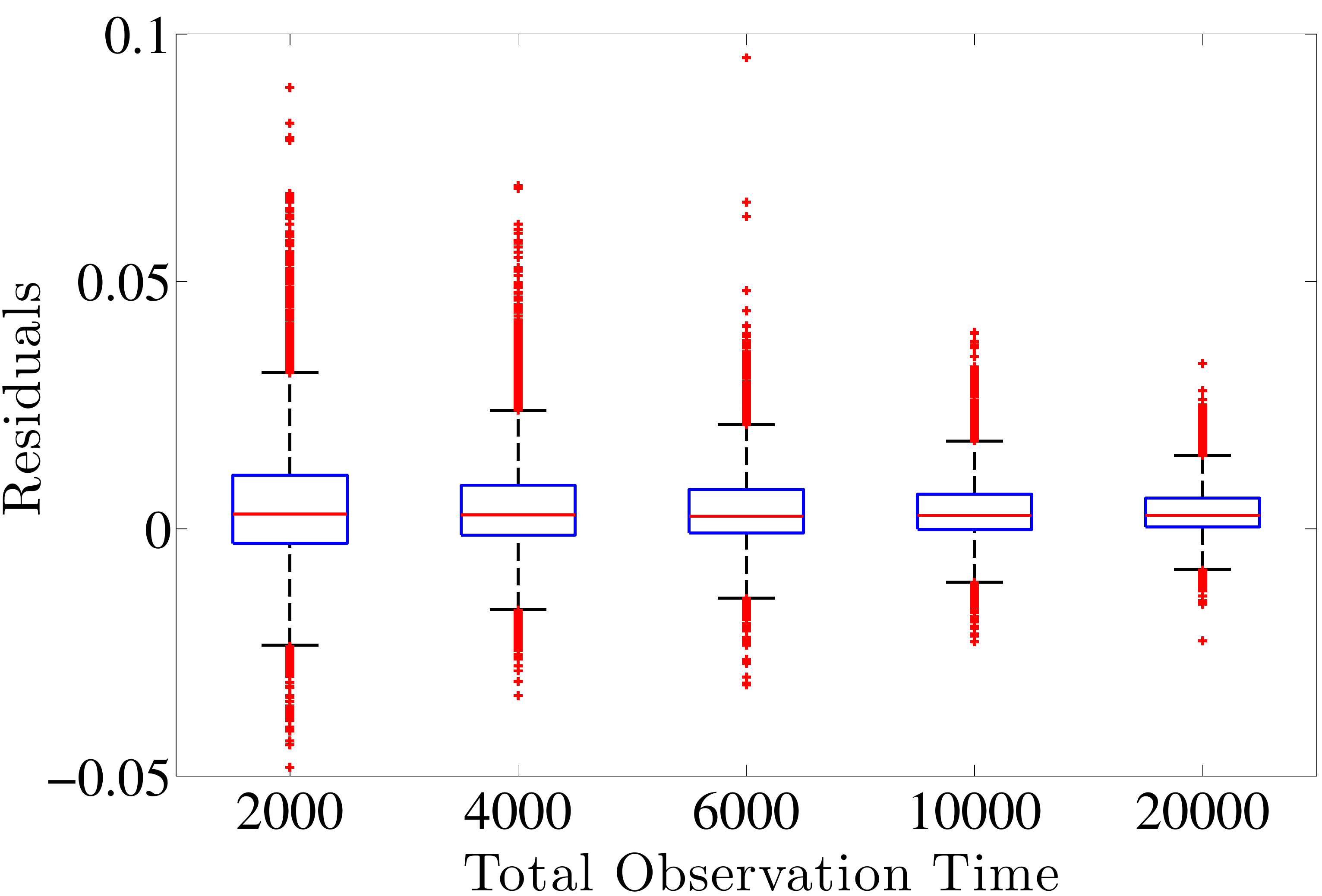}
}
\hfill
\subfloat[Mean and Cumulative Power, $\tau_{10}$]{
\includegraphics[width=0.47\textwidth]{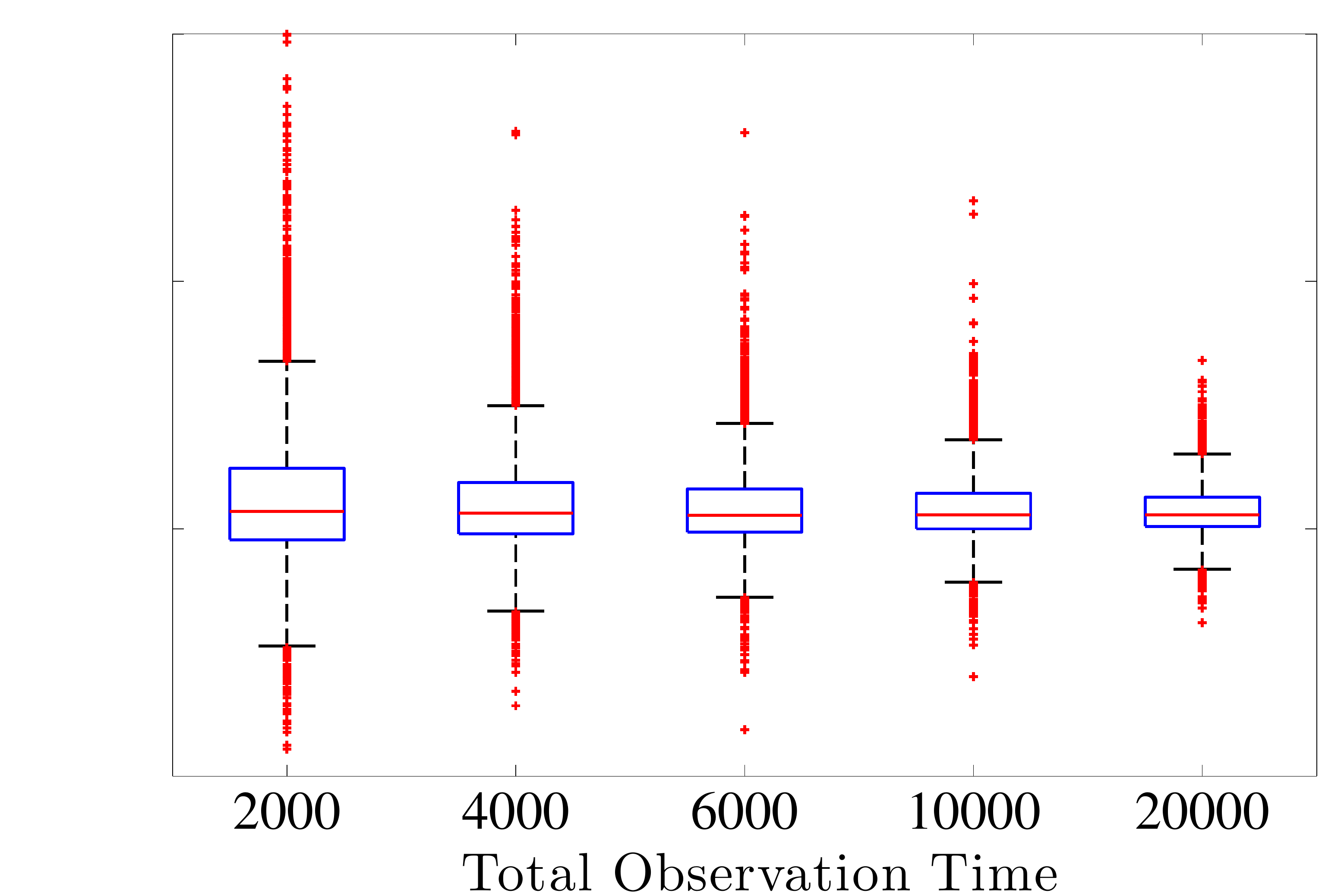}
}
\caption{Boxplots comparing the performance of various couplets. Red markers denote outliers, defined as those values separated from the median by more than 1.5 times the interquartile range.}
\label{fig: boxplots}
\end{figure}

Next we compare the different couplets to one another. \autoref{fig: boxplots} shows boxplots of the residuals for the estimators \eqref{eq: MF}, \eqref{eq: MV}, and \eqref{eq: MS}. These boxplots were generated in the same manner as the results of \autoref{fig: surfaces}, except simulations were run with a varying number of total time steps. As the number of observed time steps increases, estimates become more accurate. This is due to the law of large numbers: As the donkey takes more steps, his observable statistics---mean, variance, frequency, and cumulative power, in our case---approach their theoretical values, improving the accuracy of the inverted couplets.

Based on these boxplots, the mean and frequency couplet produces, on average, the best estimates of $\tau_{01}$ and $\tau_{10}$.  The median residual (shown as the middle, red line) appears to converge to zero quickly, and the interquartile range (the vertical distance between the two blue lines) is narrow even for a small number of time steps. These qualities indicate that the mean and frequency couplet is a more efficient and less biased estimator of $\tau_{01}$ and $\tau_{10}$ than the other two couplets.


\subsection{Likelihood-Based Estimation} \label{sec: likelihood}
Another method of parameter estimation takes a likelihood-based approach. This approach involves calculating a function, $F(\widehat{\tau_{01}},\widehat{\tau_{10}})$, that measures the likelihood of the observed data matching the parameters $\widehat{\tau_{01}}$ and $\widehat{\tau_{10}}$.  One such function is the log-likelihood function. Given a set of data, the log-likelihood function evaluates the beta probability density function 
\begin{align}
P\left(x | \widehat{\tau_{01}},\widehat{\tau_{10}}\right) = Cx^{\frac{\widehat{\tau_{01}}}{v}-1}\left(1-x\right)^{\frac{\widehat{\tau_{10}}}{v}-1}
\end{align}
at the observed data (the donkey's observed positions). The function then takes the log of each result and sums these new results. Ideally, when this process is performed with the true $\tau_{01}$ and $\tau_{10},$ the majority of data is concentrated in areas of high density, so that $F(\tau_{01},\tau_{10})$ is maximized at $(\tau_{01},\tau_{10}).$ To make this maximum a minimum, we examine $-F(\tau_{01},\tau_{10})$ instead of $F(\tau_{01},\tau_{10}).$ The estimates for $\tau_{01}$ and $\tau_{10}$ are taken to be the location of this minimum.

For example, \autoref{fig: like} graphs the negative log-likelihood function given a simulation of donkey on a line with 10,000 time steps. The input parameters were $\tau_{01} = 0.05$ and $\tau_{10} = 0.08,$ with $v = 0.1.$ The minimum of the negative log-likelihood function, shown in \autoref{fig: like} as a blue dot, is located at $\widehat{\tau_{01}} = 0.045$ and $\widehat{\tau_{10}} = 0.085.$ Thus, estimation errors are quite low. And as with couplet inversion, this likelihood-based technique benefits from greater observation time.

\begin{figure}[h]
\centering
\includegraphics[width=0.6\textwidth]{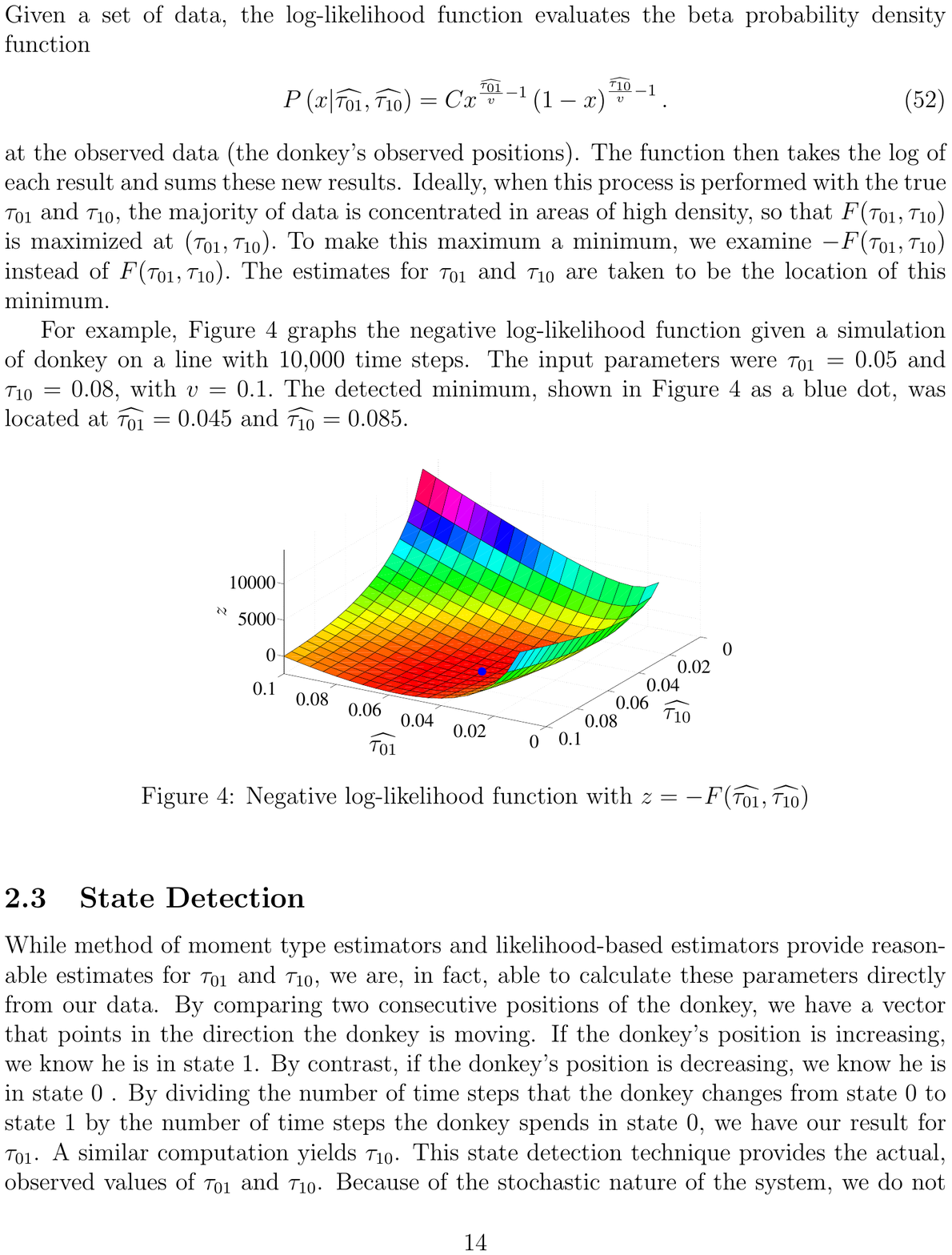} 
\caption{Negative log-likelihood function with $z = -F(\widehat{\tau_{01}},\widehat{\tau_{10}})$}
\label{fig: like}
\end{figure}



\subsection{State Detection} \label{sec: state}
While method of moments type estimators and likelihood-based estimators provide reasonable estimates for $\tau_{01}$ and $\tau_{10}$, we are, in fact, able to calculate these parameters directly from our data. By comparing two consecutive positions of the donkey, we have a vector that points in the direction the donkey is moving. If the donkey's position is increasing, we know he is in state 1. By contrast, if the donkey's position is decreasing, we know he is in state 0. By dividing the number of time steps that the donkey changes from state 0 to state 1 by the number of time steps the donkey spends in state 0, we have our result for $\tau_{01}$. A similar computation yields $\tau_{10}$. This state detection technique provides the actual, observed values of $\tau_{01}$ and $\tau_{10}$. Because of the stochastic nature of the system, we do not expect these measured values to exactly match our input values. Nevertheless, by the law of large numbers, a greater number of observed time steps should lend measured values closer to the input values. \autoref{tab: StateDetector1} affirms this expectation.

\begin{table}[h]
\caption{Comparison of input and observed values for $\tau_{ij}$}
\centering
\begin{tabular}{|c|c|c|c|}
\cline{3-4}
\multicolumn{2}{c|}{} & \multicolumn{2}{|c|}{Observed ($\times 10^{-3}$)} \\
	\cline{2-4}
	\multicolumn{1}{c|}{} & Input ($\times 10^{-3}$) & $N = 10000$ & $N = 100000$ \\ \cline{1-4}
	$\tau_{01}$ & 5.00 & 4.73 & 4.90 \\ \hline
	$\tau_{10}$ & 8.00 & 6.83 & 8.09 \\ \hline
\end{tabular}
\label{tab: StateDetector1}
\end{table}


\section{Donkey in a Triangle}
We now consider a system in which the donkey has more than two states by extending the classic, one-dimensional problem to two dimensions. We let the donkey roam in a triangular pen with vertices located at the coordinates $\left(0,0\right)$, $\left(1,0\right)$, and $\left(0,1\right)$. Let state 0 be the state in which the donkey is moving toward $\left(0,0\right)$, state 1 be the state in which the donkey is moving toward $\left(1,0\right)$, and state 2 be the state in which the donkey is moving toward $\left(0,1\right)$.\footnote{For simplicity, the donkey is always initially placed at $\left(\frac{1}{3},\frac{1}{3}\right)$ and is taken to be in state 0.} The donkey's continuous motion is modeled by the following equations:
			\begin{alignat}{2}
			\text{State 0:\quad} \frac{dx}{dt} &= v(-x) &\quad \frac{dy}{dt} &= v(-y) \\
			\text{State 1:\quad} \frac{dx}{dt} &= v(1-x) & \frac{dy}{dt} &= v(-y) \\
			\text{State 2:\quad} \frac{dx}{dt} &= v(-x) & \frac{dy}{dt} &= v(1-y).
			\end{alignat}
Keeping the notation of the one-dimensional case, the donkey switches from state $i$ to state $j$ at a given time step with probability $\tau_{ij}$.  We now have a Markov process with six parameters to estimate instead of only two.
	

\subsection{The Markov Process} \label{Donkey in a Triangle: The Markov Process}
Recall that in the one-dimensional case we examined a Markov chain to determine the expected proportion of time the donkey spends in each state. In the two-dimensional case we analyze an analogous process:
	\begin{align}
		p_{n}&=Ap_{n-1},
	\end{align}
	where
	\begin{align}
	A=\begin{bmatrix}
	1-\tau_{01}-\tau_{02}&\tau_{10}&\tau_{20} \\
	\tau_{01}&1-\tau_{10}-\tau_{12}&\tau_{21} \\
	\tau_{02}&\tau_{12}&1-\tau_{20}-\tau_{21}
	\end{bmatrix}
	\end{align}
	is still a column stochastic matrix. As before, $A$ has eigenvalue 1 and associated eigenvector (see \hyperref[app: eigenvector]{Appendix \ref*{app: eigenvector}})
	\begin{align}
	\mathbf{v}=
	\begin{bmatrix}
	\frac{\tau_{21}\tau_{10} + \tau_{12}\tau_{20} + \tau_{10}\tau_{20}}{\tau_{21}\tau_{10} + \tau_{12}\tau_{20} + \tau_{10}\tau_{20} + \tau_{20}\tau_{01} + \tau_{02}\tau_{21} + \tau_{01}\tau_{21} + \tau_{10}\tau_{02} + \tau_{01}\tau_{12} + \tau_{02}\tau_{12}} \\[0.3cm]
	\frac{\tau_{20}\tau_{01} + \tau_{02}\tau_{21} + \tau_{01}\tau_{21}}{\tau_{21}\tau_{10} + \tau_{12}\tau_{20} + \tau_{10}\tau_{20} + \tau_{20}\tau_{01} + \tau_{02}\tau_{21} + \tau_{01}\tau_{21} + \tau_{10}\tau_{02} + \tau_{01}\tau_{12} + \tau_{02}\tau_{12}} \\[0.3cm]
	\frac{\tau_{10}\tau_{02} + \tau_{01}\tau_{12} + \tau_{02}\tau_{12}}{\tau_{21}\tau_{10} + \tau_{12}\tau_{20} + \tau_{10}\tau_{20} + \tau_{20}\tau_{01} + \tau_{02}\tau_{21} + \tau_{01}\tau_{21} + \tau_{10}\tau_{02} + \tau_{01}\tau_{12} + \tau_{02}\tau_{12}} \\
	\end{bmatrix}.
	\end{align}
	In the one-dimensional case, we were able to use the eigenvector associated with the Markov matrix to determine the frequency of the donkey switching between states. By adding just one more state to our system, it becomes difficult to define what is meant by the term ``frequency." We now have several types of transitions, so using frequency as an intuitive, feature statistic is less appealing. Furthermore, as we continue to expand the system, finding a closed form for the eigenvector becomes computationally unrealistic; with just six states, the number of terms in the numerator of an eigenvector coordinate exceeds 1000 (see \hyperref[app: joseph]{Appendix \ref*{app: joseph}}).
		

	\subsection{Continuous Dynamics}
		Hoping to derive statistics such as mean and variance in the two-dimensional case, we are interested in finding the probability distribution of our system. We let $P(x,y,t)$ be the probability density of the donkey's position at time $t.$ Again, we consider a decomposition of $P$ into the state-dependent, conditional probabilities $P_0(x,y,t)$, $P_1(x,y,t)$, and $P_2(x,y,t)$:
		\begin{align}
		P(x,y,t) = P_0(x,y,t) + P_1(x,y,t) + P_2(x,y,t).
		\end{align} 
		We have the following conservation conditions, which are direct extensions of \eqref{eq: cc1} and \eqref{eq: cc2}:
		\begin{align}
			\frac{\partial P_0}{\partial t} &= -\frac{\partial}{\partial x}\left[v(-x)P_0\right] -  \frac{\partial}{\partial y}\left[v(-y)P_0\right] - \tau_{01}P_0 - \tau_{02}P_0 + \tau_{10}P_1 + \tau_{20}P_2 \label{eq: 2cc1} \\
			\frac{\partial P_1}{\partial t} &= -\frac{\partial}{\partial x}\left[v(1-x)P_1\right] - \frac{\partial}{\partial y}\left[v(-y)P_1\right] - \tau_{10}P_1 - \tau_{12}P_1 + \tau_{01}P_0 + \tau_{21}P_2 \label{eq: 2cc2} \\
			\frac{\partial P_2}{\partial t} &= -\frac{\partial}{\partial x}\left[v(-x)P_2\right] -  \frac{\partial}{\partial y}\left[v(1-y)P_2\right] - \tau_{20}P_2 - \tau_{21}P_2 + \tau_{02}P_0 + \tau_{12}P_1. \label{eq: 2cc3}
		\end{align}
		As in the one-dimensional case, we look for a steady-state solution, when
		\begin{align} 
		\frac{\partial P_0}{\partial t} = \frac{\partial P_1}{\partial t} = \frac{\partial P_2}{\partial t} = 0.
		\end{align}
		That is, the probability distribution of the location of the donkey does not change with time, and we may drop the dependence of $P_0,$ $P_1,$ and $P_2$ on $t.$ Adding \eqref{eq: 2cc1}, \eqref{eq: 2cc2}, and \eqref{eq: 2cc3} in this case, we have
		\begin{align}
			\begin{split}
			0 &= -\frac{\partial}{\partial x}\left[v(-x)P_0\right] - \frac{\partial}{\partial x}\left[v(1-x)P_1\right] - 	\frac{\partial}{\partial x}\left[v(-x)P_2\right] \\
			&\phantom{=} -\frac{\partial}{\partial y}\left[v(-y)P_0\right] - \frac{\partial}{\partial y}\left[v(-y)P_1\right] - \frac{\partial}{\partial y}\left[v(1-y)P_2\right],
			\end{split} \label{eq: conservation}
		\end{align}
		where we have collected the $x$ and $y$ components.
		Assuming total fluxes are balanced in each direction, \eqref{eq: conservation} implies
		\begin{align}
			0 &= v(x)P_0 - v(1-x)P_1 + v(x)P_2 \label{eq: balance1} \\
			0 &= v(y)P_0 + v(y)P_1 - v(1-y)P_2. \label{eq: balance2}
		\end{align}
		Solving for $P_0$ in \eqref{eq: balance1} yields
		\begin{align}
			P_0 = -P_1 + \frac{P_1}{x} - P_2. \label{eq: solved1}
		\end{align}
		Substituting \eqref{eq: solved1} into \eqref{eq: balance2} gives us
		\begin{align}
			0 &= v(y)\left(-P_1 + \frac{P_1}{x} - P_2\right) + v(y)P_1 - v(1-y)P_2 \\
			P_1 &= \frac{xP_2}{y}. \label{eq: solved2}
		\end{align}
		Using \eqref{eq: solved2} in \eqref{eq: solved1}, we see
		\begin{align}
			P_0 &= -\frac{xP_2}{y} + \frac{P_2}{y} - P_2. \label{eq: solved3}
		\end{align}
		We then apply \eqref{eq: solved2} and \eqref{eq: solved3} to \eqref{eq: 2cc3}, the left-hand side of which we have assumed to be 0:
		\begin{align}
			\begin{split}
			0 &= -\frac{\partial}{\partial x}\left[v(-x)P_2\right] - \frac{\partial}{\partial y}\left[v(1-y)P_2\right] - 	\tau_{20}P_2 - \tau_{21}P_2 \label{eq11} \\
			&\phantom{=} + \tau_{02}\left(-\frac{xP_2}{y} + \frac{P_2}{y} - P_2\right) + \tau_{12}\frac{xP_2}{y}. \end{split}
		\end{align}
		Applying the derivative operators using the product rule and rearranging terms, we arrive at the following differential equation:
		\begin{align}
			-x\frac{\partial P_2}{\partial x} + (1-y)\frac{\partial P_2}{\partial y} = \frac{P_2}{v}\left(-\tau_{20} - \tau_{21} + \tau_{02} + \frac{\tau_{02}}{y} + \frac{(\tau_{12} - \tau_{02})x}{y} + 2v\right).\label{eq: pde}
		\end{align}
		
Although we identify \eqref{eq: pde} as a first-order, semilinear partial differential equation \linebreak \citep{pde}, we are currently not able to solve it for $P_2$. In fact, we are not sure this is a well-posed problem. Unfortunately, this means we cannot apply the same methods as before to find expressions for moments such as mean and variance. Moreover, we have no probability distribution with which to perform the same likelihood-based scheme as described in \ref{sec: likelihood}.
		
				
\subsection{State Detection} \label{sec: detection}
As some of the methods applied to the donkey on a line system become intractable in a multi-dimensional extension, we turn to state detection as in \ref{sec: state}.  Before applying a method of state detection to the donkey in a triangular pen, however, we check that his movements are always linear. Consider the case when the donkey is in state 0. As stated earlier, his movement in the $x$ direction is given by $\frac{dx}{dt}=v\left(-x\right)$, and his movement in the $y$ direction is given by $\frac{dy}{dt}=v\left(-y\right)$. Dividing these, we find 
		\begin{align}
		\frac{\frac{dy}{dt}}{\frac{dx}{dt}}=\frac{dy}{dx}=\frac{v\left(-y\right)}{v\left(-x\right)}=\frac{y}{x}. \label{eq: slope}
		\end{align}
		Since the slope given by \eqref{eq: slope} is exactly in the direction of $(0,0),$ we see that the donkey's movement is linear. A similar computation can be performed for states 1 and 2 to show his movement is linear in every state. From this fact, we build a state detector for the donkey using the following algorithm:
		\begin{enumerate}
			\item Compare two consecutive positions of the donkey to make a vector pointing in the direction of the donkey's current movement; call it $\mathbf{a}$.
			\item Construct vectors pointing from the donkey's current position to each of the coordinates of the triangle; call them $\mathbf{b}_i$ for $i = 0,1,2$.
			\item Compare $\mathbf{a}$ to each $\mathbf{b}_i$ by calculating the cosine of the angle between them via the formula 
			\begin{align}
			\cos \theta = \frac{\mathbf{a}\cdot\mathbf{b}_i}{\|\mathbf{a}\|\|\mathbf{b}_i\|}. \label{eq: cos}
			\end{align} The $\mathbf{b}_i$ that yields $\cos \theta =1$ is the vector that is parallel to the donkey's movement. Thus, the donkey is in state $i$.
			\item Repeat this process for all observed data.
		\end{enumerate}
		
Now that we have a state detector, we can use it to find each $\tau_{ij}$ empirically. For example, if we want to estimate $\tau_{10}$, we first use the state detector to find the number of time steps that the donkey spends in state 1. Next, find each time step that the donkey changes from state 1 to state 0. By dividing these results, we have our estimate for $\tau_{10}$. This process is easily seen to generalize to any convex geometry in which the donkey can roam.

Because of the stochastic nature of our system, we do not expect our observed $\tau_{ij}$'s to match perfectly with our input $\tau_{ij}$'s. However, by the law of large numbers, we expect that they will converge as the number of time steps increases. \autoref{tab: StateDetector} shows the results from a simulation with $v=0.01$ and varied total observed time steps $N$. As expected, a greater number of observed time steps yields, on average, observed parameter values closer to input parameter values.
		\begin{table}[h]
		\caption{Comparison of input and observed values for $\tau_{ij}$}
		\centering
		\begin{tabular}{|c|c|c|c|}
		\cline{3-4}
		\multicolumn{2}{c|}{} & \multicolumn{2}{|c|}{Observed ($\times 10^{-3}$)} \\
			\cline{2-4}
			\multicolumn{1}{c|}{} & Input ($\times 10^{-3}$) & $N = 10000$ & $N = 100000$ \\ \cline{1-4}
			$\tau_{01}$ & 1.00 & 0.76 & 0.81 \\ \hline
			$\tau_{02}$ & 6.00 & 5.83 & 6.56\\ \hline
			$\tau_{10}$ & 2.00 & 3.52 & 1.71 \\ \hline
			$\tau_{12}$ & 3.00 & 2.24 & 3.15 \\ \hline
			$\tau_{20}$ & 4.00 & 5.13 & 3.95 \\ \hline
			$\tau_{21}$ & 5.00 & 5.13 & 5.02 \\ \hline
		\end{tabular}
		\label{tab: StateDetector}
		\end{table}


\section{Noisy Data}
The state detector solves our two-dimensional extension of Buridan's ass, as it allows us to directly measure the parameters of the system. Although these measured values may not be equal to the input parameters, they are the values we wish to obtain as we continue our exploration of donkey in a triangle. They are the observed, as opposed to theoretical, probabilities of transitioning, making them the target estimates for further simulations. 

A system in which we have perfect measurements of the donkey's trajectory is not realistic. Real-world observations of dynamical systems involve some level of error, or noise. By adding noise to our system, we obtain data that is comparable to real-world measurements of dynamical systems. We now shift our focus to managing noise in our data, with the goal of obtaining reasonable estimates for the $\tau_{ij}$'s.

To compare fairly the performance of the techniques discussed, we apply each to the same data set. This data was taken from the simulation used to construct \autoref{tab: StateDetector}. From here on we will assume we observe the donkey for $N=10000$ time steps, so that the target estimates for the various $\tau_{ij}$ are those shown in the middle column of \autoref{tab: StateDetector}. Independent and identically distributed noise was added to each coordinate of the donkey's trajectory, at each time step. This noise was taken from a normal distribution with a mean of 0 and a standard derivation of $0.01.$


\subsection{Linear Regression}
Linear regression is a statistical technique that aids in the study of linear relationships between variables. By modeling a set of data by a line, linear regression has potential to remove artifacts of noise. Outlined below is our application of linear regression to state detection:
\begin{enumerate}
\item Consider the donkey's observed positions in the $x$ and $y$ directions separately. We let $\mathbf{x}$ and $\mathbf{y}$ be vectors containing the positions of the donkey in these directions, so that $\left(x_j,y_j\right)$ is the donkey's observed location at the $j$th time step. We let $\mathbf{t}$ be the vector containing the times at which observations are made. In general, we assume $t_j = j-1.$
\item For each $j = 1,\dots,N+1,$ where $N$ is the number of observed time steps, we calculate the lines of best fit to the data sets $\left(t_k,x_k\right)_{k=j}^{j+W}$ and $\left(t_k,y_k\right)_{k=j}^{j+W}$, where $W$ is the so-called viewing window. That is, $W$ is the number of time steps from the current position considered in linear regression. (If $j+W$ exceeds the length of $\mathbf{x}$ or $\mathbf{y},$ then $W$ is taken to be the largest possible viewing window.)
\item We let $a_1$ be the slope of the best-fit line in the $x$ direction, and $a_2$ be the slope of the best-fit line in the $y$ direction. We set $\mathbf{a} = \left[a_1, a_2\right]^T$.
\item Next construct vectors pointing from $(\overline{x},\overline{y}),$ where $\overline{x} = \frac{1}{W+1}\sum_{k=j}^{j+W} x_k$, and $\overline{y} = \frac{1}{W+1}\sum_{k=j}^{j+W} y_k,$ to each of the coordinates of the triangle; call these vectors $\mathbf{b}_i$ for $i = 0, 1, 2.$ 
\item Proceed as in \ref{sec: detection}, now accounting for noise by taking the $i$ that yields the greatest value in \eqref{eq: cos}.
\end{enumerate}

Parameter estimates obtained using this algorithm are displayed in \autoref{tab: linreg}. Although increasing the viewing window $W$ leads to more reasonable estimates, in no case does linear regression provide reliable information. Each estimate's absolute relative error is also shown in blue in \autoref{tab: linreg}, where the error is computed based on the observed $\tau_{ij}$ values of \autoref{tab: StateDetector}. Clearly the noise present in the system must be dealt with in a different manner.

\begin{table}[ht] \centering
\caption{Estimated values $(\times 10^{-3})$ for $\tau_{ij}$ and \textcolor{blue}{relative error} after adding noise, using linear regression with different viewing windows}
\begin{tabular}{|c|c|c|c|c|c|c|}
	\cline{2-4}
	\multicolumn{1}{c|}{} & $W = 1$ & $W = 3$ & $W = 5$ \\
	\cline{1-4}
	$\tau_{01}$ & 415.67 \textcolor{blue}{$(54471.04\%)$} 
	& 248.34 \textcolor{blue}{$(32581.83\%)$} & 160.30 \textcolor{blue}{$(20995.21\%)$} \\ \hline
	$\tau_{02}$ & 416.29 \textcolor{blue}{$(7045.70\%)$} 
	& 239.39 \textcolor{blue}{$(4009.18\%)$} & 131.80 \textcolor{blue}{$(2162.38\%)$} \\ \hline
	$\tau_{10}$ & 372.51 \textcolor{blue}{$(10492.96\%)$} 
	& 214.55 \textcolor{blue}{$(6001.10\%)$} & 138.62 \textcolor{blue}{$(3841.92\%)$} \\ \hline
	$\tau_{12}$ & 412.57 \textcolor{blue}{$(18336.12\%)$} 
	& 193.85 \textcolor{blue}{$(8562.11\%)$} & 100.55 \textcolor{blue}{$(4392.25\%)$} \\ \hline
	$\tau_{20}$ & 372.17 \textcolor{blue}{$(7154.76\%)$} 
	& 206.50 \textcolor{blue}{$(3925.43\%)$} & 122.19 \textcolor{blue}{$(2281.89\%)$} \\ \hline
	$\tau_{21}$ & 402.30 \textcolor{blue}{$(7742.08\%)$} 
	& 191.39 \textcolor{blue}{$(3630.75\%)$} & 94.52 \textcolor{blue}{$(1742.60\%)$} \\ \hline 
	\multicolumn{1}{c|}{\vspace{-1ex}} & \multicolumn{3}{c|}{\vspace{-1ex}} \\ \cline{2-4}
	\multicolumn{1}{c|}{} & $W = 10$ & $W = 15$ & $W = 20$ \\
	\cline{1-4}
	$\tau_{01}$ & 54.47 \textcolor{blue}{$(7068.42\%)$} & 28.32 \textcolor{blue}{$(3626.27\%)$}
	& 22.17 \textcolor{blue}{$(2817.15\%)$} \\ \hline
	$\tau_{02}$ & 53.31 \textcolor{blue}{$(815.01\%)$} & 29.16 \textcolor{blue}{$(400.47\%)$}
	& 22.17 \textcolor{blue}{$(280.50\%)$} \\ \hline
	$\tau_{10}$ & 55.13 \textcolor{blue}{$(1467.63\%)$} & 36.78 \textcolor{blue}{$(945.85\%)$}
	& 29.14 \textcolor{blue}{$(728.60\%)$} \\ \hline
	$\tau_{12}$ & 40.34 \textcolor{blue}{$(1702.84\%)$} & 25.68 \textcolor{blue}{$(1047.60\%)$}
	& 17.93 \textcolor{blue}{$(701.28\%)$} \\ \hline
	$\tau_{20}$ & 57.25 \textcolor{blue}{$(1015.92\%)$} & 26.85 \textcolor{blue}{$(423.31\%)$}
	& 20.81 \textcolor{blue}{$(305.74\%)$} \\ \hline
	$\tau_{21}$ & 37.06 \textcolor{blue}{$(622.41\%)$} & 29.29 \textcolor{blue}{$(470.88\%)$}
	& 20.51 \textcolor{blue}{$(299.77\%)$} \\ \hline
\end{tabular}
\label{tab: linreg}
\end{table}


\subsection{Denoising Techniques}
An alternative approach to linear regression is to denoise the observed data before applying state detection. That is, through a chosen method, we transform our observed, noisy data into a data set which should represent the actual trajectory of the donkey. We now explore various methods of denoising and compare the accuracy of the resulting parameter estimates. Each method is presented as applied to a one-dimensional data set. That is, we denoise the data vectors $\mathbf{x}$ and $\mathbf{y}$ (each one containing $N+1$ coordinates) separately and then apply the state detector presented in \ref{sec: detection} to this denoised data, thereby producing estimates of the various $\tau_{ij}.$


\subsubsection{Locally Weighted Polynomial Regression}
The first denoising technique we explore is locally weighted polynomial regression (LWPR). LWPR is a method of fitting a curve to a data set using low-degree polynomials \citep{lwpr}. To utilize the versatility of polynomials while avoiding the Runge phenomenon that plagues high-degree interpolation \citep{runge}, LWPR fits polynomials to subsets of the data rather than fitting a single polynomial to the entire set.  A smoothing process is then applied to join the local polynomials into a single curve. This process is governed by a smoothing parameter, $0<h\leq1,$ which is the percentage of the total data that is considered in smoothing about a given data point \citep{smooth}.

In the same simulation as before, LWPR was performed using quadratic polynomials and $h=0.005$. Results are shown in \autoref{tab: lwpr}. Despite being far more reasonable than the estimates provided by linear regression, those provided by LWPR are still poor. The estimates are nearly correct in the ordering of the parameters (i.e.~which are the smallest, and which are the largest), but they are, in every case, overestimates. Such distortion of the parameters is caused by over-detection of state transitions. The performance of LWPR may be improved by utilizing polynomials of a different degree and adjusting the parameter $h.$

\begin{table}[ht] \centering
\caption{Estimated values for $\tau_{ij}$ and \textcolor{blue}{relative error} after adding noise, using locally weighted polynomial regression}
\begin{tabular}{|c|c|c|c|}
	\cline{2-4}
	\multicolumn{1}{c|}{} & Input ($\times 10^{-3}$) & Observed ($\times 10^{-3}$) & Estimate ($\times 10^{-3}$) \\ \cline{1-4}
	$\tau_{01}$ & 1.00 & 0.76 & 3.76 \textcolor{blue}{$(395.40\%)$} \\ \hline
	$\tau_{02}$ & 6.00 & 5.83 & 9.41 \textcolor{blue}{$(61.54\%)$} \\ \hline
	$\tau_{10}$ & 2.00 & 3.52 & 8.48 \textcolor{blue}{$(141.06\%)$} \\ \hline
	$\tau_{12}$ & 3.00 & 2.24 & 4.89 \textcolor{blue}{$(118.55\%)$} \\ \hline
	$\tau_{20}$ & 4.00 & 5.13 & 7.16 \textcolor{blue}{$(39.50\%)$} \\ \hline
	$\tau_{21}$ & 5.00 & 5.13 & 8.40 \textcolor{blue}{$(63.76\%)$} \\ \hline
\end{tabular}
\label{tab: lwpr}
\end{table}


\subsubsection{Wavelet Filtering}
Wavelets are functions that together form an orthonormal basis of an infinite-dimensional vector space\footnote{Often this vector space is $L^2(\mathbb{R}),$ the space of square-integrable functions on the real line.} consisting of other functions. There are many different wavelet families, each suitable to represent a unique class of functions. While providing the convenience of orthonormal bases, wavelets have the added utility of decomposing functions by both frequency and scale \citep{wavelets}. That is, frequency can be analyzed over subsets of a function's domain, in contrast to decomposition into a Fourier basis, where a function's entire domain is considered in each basis coefficient.

Wavelets can be used to filter noisy signals by decomposing them into their orthonormal basis representation and adjusting coefficients polluted by noise. In our case, we use the symlets 8 wavelet basis (up to level 6) with multi-level, soft thresholding. \autoref{tab: wavelet} shows the parameter estimations resulting from this denoising. The overestimates suggest that wavelet filtering still allows over-detection of state transitions. There are many other wavelet families, however, and a variety of thresholding techniques. Wavelet filtering cannot be discarded entirely before performing a thorough exploration of the many possible combinations.

\begin{table}[ht] \centering
\caption{Estimated values for $\tau_{ij}$ and \textcolor{blue}{relative error} after adding noise, using wavelet filtering}
\begin{tabular}{|c|c|c|c|}
	\cline{2-4}
	\multicolumn{1}{c|}{} & Input ($\times 10^{-3}$) & Observed ($\times 10^{-3}$) & Estimate ($\times 10^{-3}$) \\ \cline{1-4}
	$\tau_{01}$ & 1.00 & 0.76 & 22.52 \textcolor{blue}{$(2863.47\%)$} \\ \hline
	$\tau_{02}$ & 6.00 & 5.83 & 49.21\textcolor{blue}{$(744.66\%)$} \\ \hline
	$\tau_{10}$ & 2.00 & 3.52 & 28.40 \textcolor{blue}{$(707.54\%)$} \\ \hline
	$\tau_{12}$ & 3.00 & 2.24 & 21.06 \textcolor{blue}{$(841.05\%)$} \\ \hline
	$\tau_{20}$ & 4.00 & 5.13 & 51.39 \textcolor{blue}{$(901.80\%)$} \\ \hline
	$\tau_{21}$ & 5.00 & 5.13 & 22.64 \textcolor{blue}{$(341.27\%)$} \\ \hline
\end{tabular}
\label{tab: wavelet}
\end{table}


\subsubsection{Butterworth Filtering}
The Butterworth filter is a signal processing tool used to denoise a system by permitting and dampening certain specified frequencies present in the data \citep{butterworth}. There are three types of Butterworth filters: lowpass, highpass, and bandpass.  A lowpass filter permits frequencies below a specified cutoff frequency $\omega_c$ but gradually dampens those exceeding $\omega_c.$ In contrast, a highpass filter permits frequencies above the cutoff but dampens those below it.  A bandpass filter has both lowpass and highpass cutoff frequencies. 

The type of filter we apply depends on which frequencies are most prevalent in the observed data. \autoref{fig: frequencies} shows that, at least in our sample simulation, such frequencies are low in the spectrum. The $x$ coordinate of the donkey's trajectory is shown in \hyperref[subfig: frequencies1]{Figure \ref*{subfig: frequencies1}}, and the discrete Fourier transform $F$ of $\mathbf{x}$, defined by
\begin{align}
F(\omega) = \sum_{k = 1}^{N+1} x_k e^{\frac{-2\pi \sqrt{-1} \omega(k-1)}{N+1}},
\end{align}
is shown in \hyperref[subfig: frequencies2]{Figure \ref*{subfig: frequencies2}}.  As the majority of energy is contained in the lower frequencies, a lowpass Butterworth filter is suitable for our problem. Our goal is to eliminate noise that may dominate high frequencies while relying on the high energy in the low frequencies to suppress the effect of noise there.

\begin{figure}[ht]
\subfloat[Noiseless $\mathbf{x}$ against time]{
\includegraphics[width=0.47\textwidth]{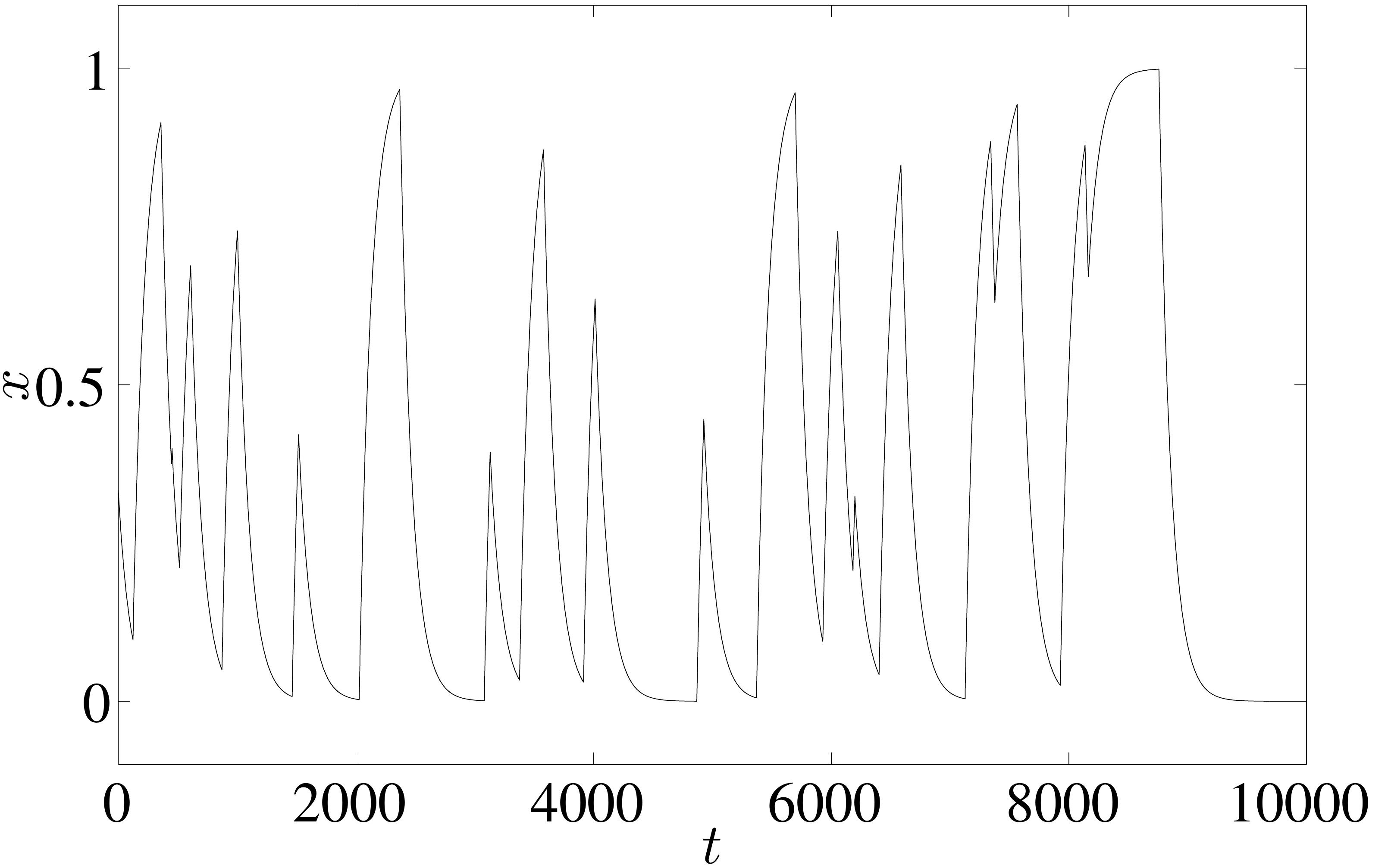}
\label{subfig: frequencies1}	
}
\hfill
\subfloat[Discrete Fourier transform of $\mathbf{x}$]{
\includegraphics[width=0.47\textwidth]{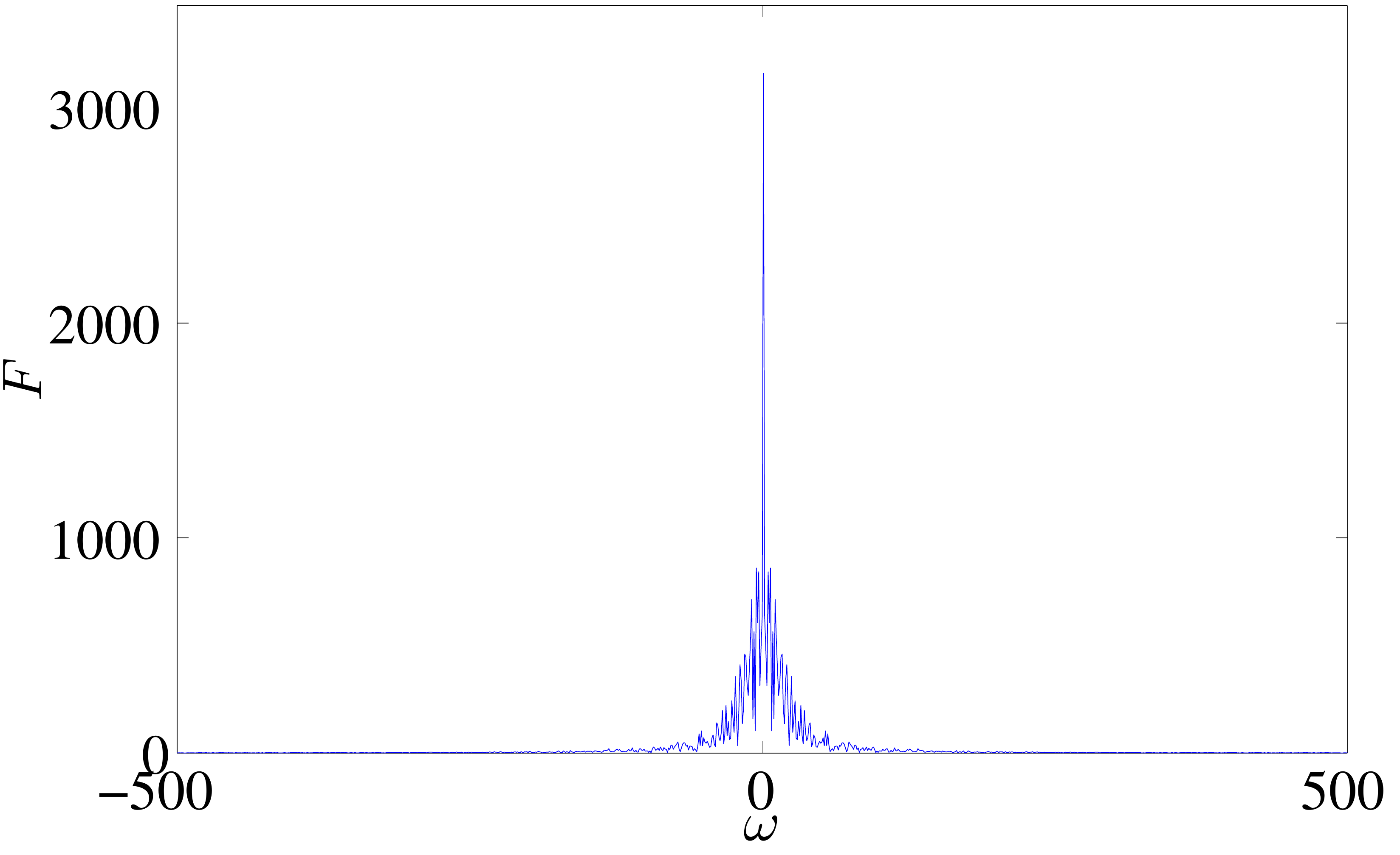}
\label{subfig: frequencies2}
}
\caption{Identification of predominant frequencies in the donkey's trajectory}
\label{fig: frequencies}
\end{figure}

To denoise the donkey's trajectory, a fifth order lowpass Butterworth filter was used, with $\omega_c = 100.$ The resulting parameter estimates are given in \autoref{tab: butterworth}. These results are easily the best seen thus far.

\begin{table}[ht] \centering
\caption{Estimated values for $\tau_{ij}$ and \textcolor{blue}{relative error} after adding noise, using butterworth filtering}
\begin{tabular}{|c|c|c|c|}
	\cline{2-4}
	\multicolumn{1}{c|}{} & Input ($\times 10^{-3}$) & Observed ($\times 10^{-3}$) & Estimate ($\times 10^{-3}$) \\ \cline{1-4}
	$\tau_{01}$ & 1.00 & 0.76 & 2.68 \textcolor{blue}{$(253.19\%)$} \\ \hline
	$\tau_{02}$ & 6.00 & 5.83 & 4.29 \textcolor{blue}{$(26.29\%)$} \\ \hline
	$\tau_{10}$ & 2.00 & 3.52 & 4.14 \textcolor{blue}{$(17.77\%)$} \\ \hline
	$\tau_{12}$ & 3.00 & 2.24 & 2.55 \textcolor{blue}{$(13.89\%)$} \\ \hline
	$\tau_{20}$ & 4.00 & 5.13 & 4.57 \textcolor{blue}{$(12.95\%)$} \\ \hline
	$\tau_{21}$ & 5.00 & 5.13 & 3.51 \textcolor{blue}{$(31.60\%)$} \\ \hline
\end{tabular}
\label{tab: butterworth}
\end{table}


\subsubsection{Total Variation Denoising}
The last denoising approach we considered is a technique commonly used in image processing: total variation (TV) \citep{TV}. Unlike the Butterworth filter, TV is not spectral-based; that is, it does not involve decomposition into a Fourier basis. Rather, to denoise a vector $\mathbf{x},$ TV seeks to solve the optimization problem
\begin{align}
\argmin_{\hat{\mathbf{x}}} \frac{\gamma}{2} \| \hat{\mathbf{x}} - \mathbf{x} \|_2^2 + \| \nabla \hat{\mathbf{x}} \|_1,
\label{eq4.5.1}
\end{align}
where
\begin{align}
\nabla = \begin{bmatrix}
0&0&0&\cdots&0&0 \\
-1&1&0&\cdots&0&0 \\
0&-1&1&\cdots&0&0 \\
\vdots&\vdots&\vdots&\ddots&\vdots&\vdots \\
0&0&0&\cdots&-1&1
\end{bmatrix}
\end{align}
is a discrete derivative operator.\footnote{$\nabla$ is shown with entries of $\pm 1$ since we assume measurements are taken a unit of time apart. The matrix, however, can be generalized for nonunit and nonuniform spacing, each row with entries of $\pm \Delta t$ for a unique $\Delta t.$} The goal is to capture the overall behavior of $\mathbf{x}$ with an estimate $\hat{\mathbf{x}},$ while removing small, spurious oscillations that are merely artifacts of noise. Hence, we minimize the sum of a fitting term and a penalty term that discretely approximates the integral of the absolute value of the derivative (this integral is called a function's {\it total variation}). Our choice of $\gamma$ should reflect the relative importance of minimizing each term. With greater noise, oscillations become more pronounced, so $\gamma$ would be placed at a lower value to assign more weight to minimizing total variation.

Difficulty lies is solving \eqref{eq4.5.1}, due to the non-differentiability of the $\ell1$-norm.\footnote{The $\ell1$-norm is defined as $\| \mathbf{x} \|_1 = \sum_k \lvert x_k \rvert.$ The $\ell2$-norm is defined as $\| \mathbf{x} \|_2 = \sqrt{\sum_k \lvert x_k \rvert^2}.$} To resolve this issue, we refer to \cite{bregman}, in which the authors rephrase the unconstrained minimization into a constrained one, in the process splitting the $\ell1$ and $\ell2$ portions of the problem. In this new formulation, the authors solve the minimization using so-called Split Bregman iteration, which requires the convexity of the $\ell1$- and $\ell2$-norms. We demonstrate below this process as applied to \eqref{eq4.5.1}.

We first reformulate \eqref{eq4.5.1} as an equivalent unconstrained problem:
\begin{align}
\argmin_{\hat{\mathbf{x}}} \frac{\gamma}{2} \| \hat{\mathbf{x}} - \mathbf{x} \|_2^2 + \| \mathbf{d} \|_1 \text{ such that } \mathbf{d} = \nabla \hat{\mathbf{x}}. \label{eq4.5.3}
\end{align}
The constraint in \eqref{eq4.5.3}, though, is weakly enforced, introducing a second fitting term with weight parameter $\lambda$:
\begin{align}
\argmin_{\hat{\mathbf{x}}} \frac{\gamma}{2} \| \hat{\mathbf{x}} - \mathbf{x} \|_2^2 + \frac{\lambda}{2} \| \mathbf{d} - \nabla \hat{\mathbf{x}}\|_2^2 + \| \mathbf{d} \|_1. \label{eq4.5.4}
\end{align}
The optimization as presented in \eqref{eq4.5.4} is now amenable to Split Bregman iteration, which initializes $\mathbf{d}_0 = \mathbf{b}_0 = \mathbf{0}$ and follows a two step process for $i = 1,\dots,n$:
\begin{subequations}  \label{eq4.5.5}
\begin{align}
&\text{Step 1: } (\hat{\mathbf{x}}_{i+1},\mathbf{d}_{i+1}) = \argmin_{\hat{\mathbf{x}},\mathbf{d}} \frac{\gamma}{2} \| \hat{\mathbf{x}} - \mathbf{x} \|_2^2 + \frac{\lambda}{2}\| \mathbf{d} - \nabla \hat{\mathbf{x}} - \mathbf{b}_i \|_2^2 + \| \mathbf{d} \|_1 \\
&\text{Step 2: } \mathbf{b}_{i+1} = \mathbf{b}_i + \nabla \hat{\mathbf{x}}_{i+1} - \mathbf{d}_{i+1}.
\end{align}
\end{subequations}
For step 1, the algorithm solves for $\hat{\mathbf{x}}$ and $\mathbf{d}$ separately. First,
\begin{align}
\hat{\mathbf{x}}_{i+1} = \argmin_{\hat{\mathbf{x}}} \frac{\gamma}{2} \| \hat{\mathbf{x}} - \mathbf{x} \|_2^2 + \frac{\lambda}{2}\| \mathbf{d} - \nabla \hat{\mathbf{x}} - \mathbf{b}_i \|_2^2, \label{eq4.5.6}
\end{align}
which can be solved by differentiating with respect to $\hat{\mathbf{x}}$, and setting the result equal to $\mathbf{0}$, as the right-hand side is convex. That is, the minimizer $\hat{\mathbf{x}}$ is such that
\begin{align}
\gamma \left(\hat{\mathbf{x}} - \mathbf{x} \right)^T - \lambda\left(\mathbf{d}_i - \nabla \hat{\mathbf{x}} - \mathbf{b}_i\right)^T\nabla = \mathbf{0}^T,
\end{align}
or equivalently
\begin{align}
\left(\gamma I + \nabla^T\nabla\right)\hat{\mathbf{x}} = \lambda\nabla^T\left(\mathbf{d}_i - \mathbf{b}_i\right) + \gamma\mathbf{x}.
\end{align}
We now attain the solution for \eqref{eq4.5.6} by solving for $\hat{\mathbf{x}}:$
\begin{align}
\hat{\mathbf{x}}_i = \left(\gamma I + \nabla^T\nabla\right)^{-1}\left(\lambda\nabla^T\left(\mathbf{d}_i - \mathbf{b}_i\right) + \gamma\mathbf{x}\right). \label{eq4.5.9}
\end{align}

Next, the algorithm produces $\mathbf{d}_{i+1}$ coordinatewise by
\begin{align}
d_{i+1,j} = \shrink\bigl((\nabla\mathbf{c}_{i+1})_j + b_{i,j}, 1/\lambda\bigr),
\end{align}
where
\begin{align}
\shrink(x,\delta) = \frac{x}{\lvert x \rvert} \cdot \max(\lvert x \rvert - \delta,0).
\end{align}
Once the $n$ iterations of \eqref{eq4.5.5} are carried out, we have our denoised $\hat{\mathbf{x}}$ in the form of \eqref{eq4.5.9}.

The above algorithm was applied to our system with $\gamma = 0.5$, $\lambda = 20,$ and $n = 10.$ The parameter estimates obtained are displayed in \autoref{tab: tv}. These results are comparable to those resulting from Butterworth filtering. The parameter with the lowest input and lowest observed value, $\tau_{01},$ is still estimated with the greatest error. This error suggests parameters with smaller values come with greater uncertainty. To explain this, we note that a smaller parameter value implies the associated transition is observed less frequently, perhaps making estimation schemes more sensitive to noise. 

\begin{table}[!ht] \centering
\caption{Estimated values for $\tau_{ij}$ and \textcolor{blue}{relative error} after adding noise, using total variation}
\begin{tabular}{|c|c|c|c|}
	\cline{2-4}
	\multicolumn{1}{c|}{} & Input ($\times 10^{-3}$) & Observed ($\times 10^{-3}$) & Estimate ($\times 10^{-3}$) \\ \cline{1-4}
	$\tau_{01}$ & 1.00 & 0.76 & 3.45 \textcolor{blue}{$(374.69\%)$} \\ \hline
	$\tau_{02}$ & 6.00 & 5.83 & 4.44 \textcolor{blue}{$(23.79\%)$} \\ \hline
	$\tau_{10}$ & 2.00 & 3.52 & 4.44 \textcolor{blue}{$(26.30\%)$} \\ \hline
	$\tau_{12}$ & 3.00 & 2.24 & 2.86 \textcolor{blue}{$(27.59\%)$} \\ \hline
	$\tau_{20}$ & 4.00 & 5.13 & 4.62 \textcolor{blue}{$(9.86\%)$} \\ \hline
	$\tau_{21}$ & 5.00 & 5.13 & 3.08 \textcolor{blue}{$(39.91\%)$} \\ \hline
\end{tabular}
\label{tab: tv}
\end{table}

As can be seen in \autoref{tab: tv}, the total variation approach tends to create parameter estimates exhibiting less variance than the actual, observed values. That is, parameters with high values are often underestimated, and parameters with low values are often overestimated. This may be a result of a marked shortcoming of TV: the ability to preserve sharp changes in the data. While TV eliminates the spurious oscillations created by noise, it follows state transitions too early. \hyperref[subfig: denoise4]{Figure \ref*{subfig: denoise4}} illustrates this feature.

\begin{figure}[!htb]
\subfloat[Locally Weighted Polynomial Regression]{
\includegraphics[width=0.47\textwidth]{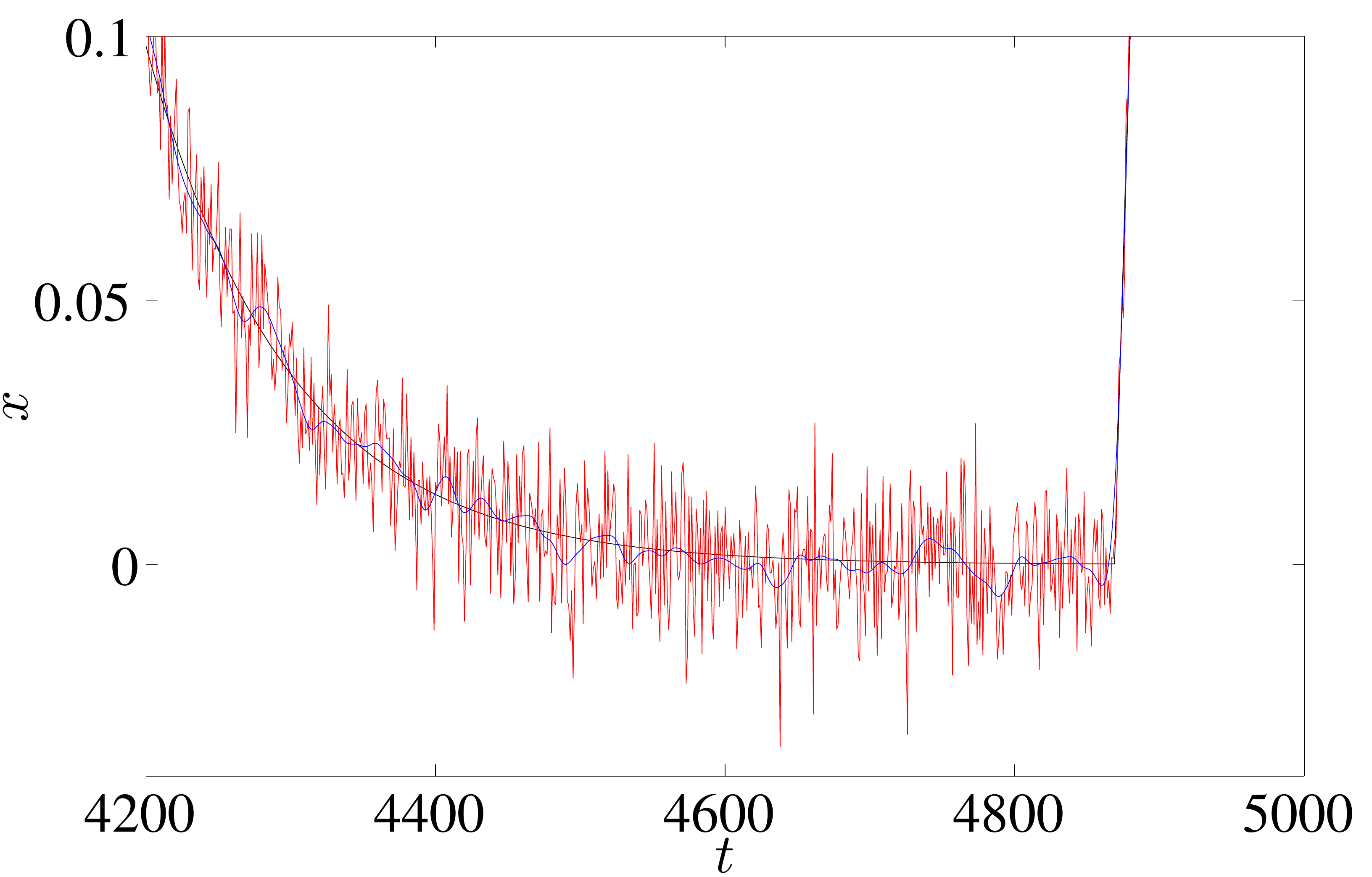}
\label{subfig: denoise1}	
}
\hfill
\subfloat[Wavelet Filtering]{
\includegraphics[width=0.47\textwidth]{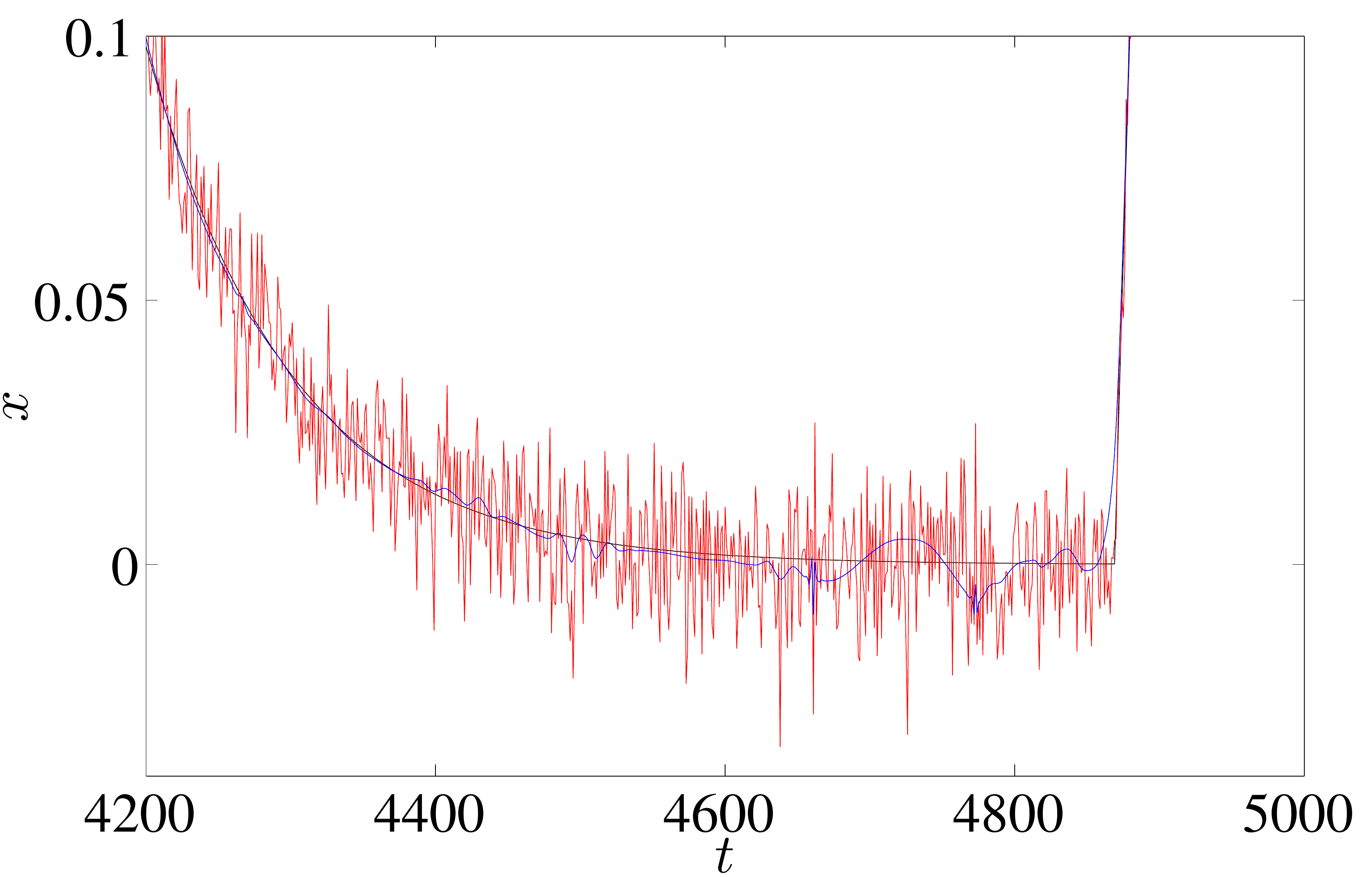}
\label{subfig: denoise2}
}
\hfill
\subfloat[Butterworth Filtering]{
\includegraphics[width=0.47\textwidth]{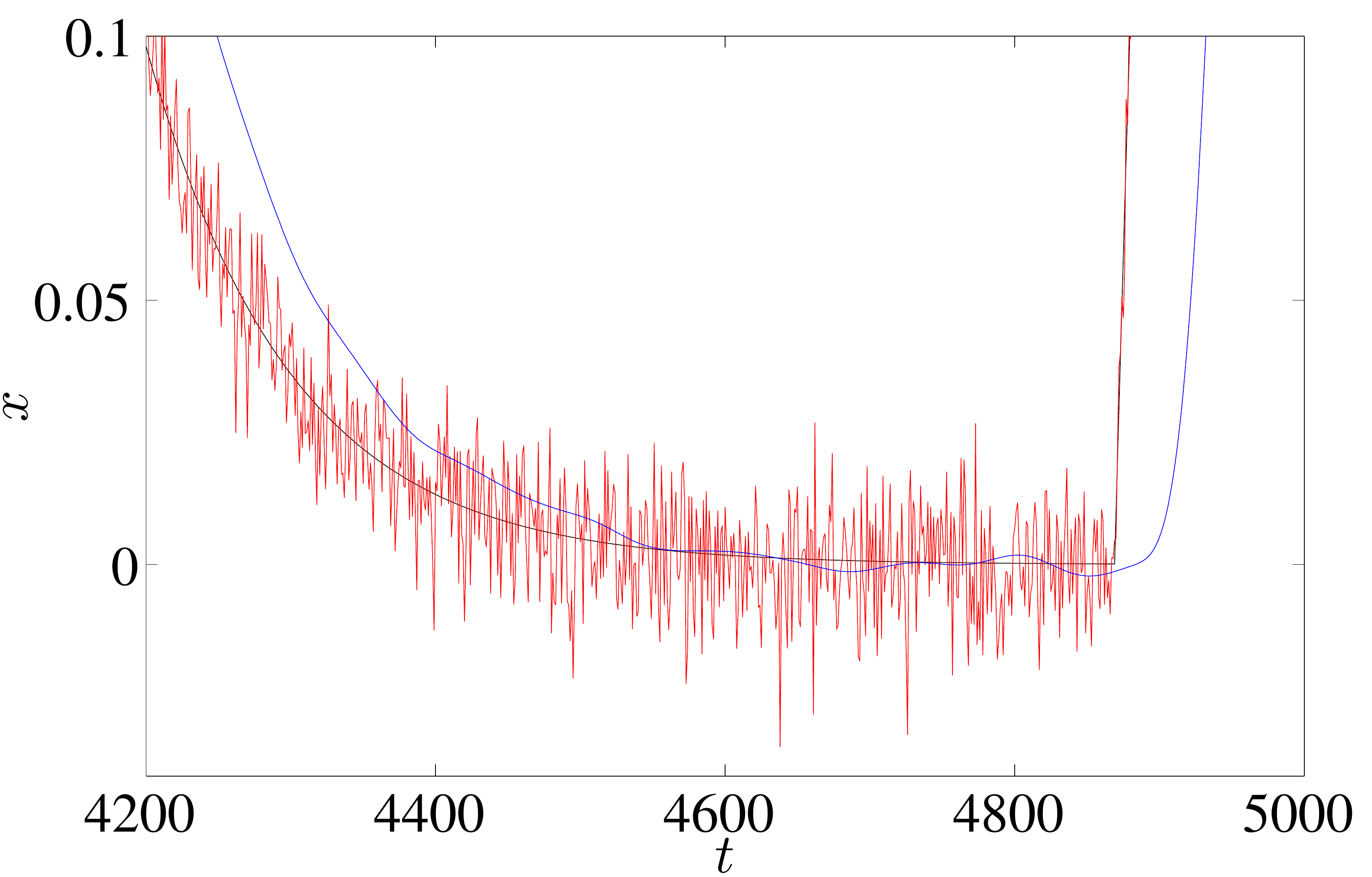}
\label{subfig: denoise3}
}
\hfill
\subfloat[Total Variation]{
\includegraphics[width=0.47\textwidth]{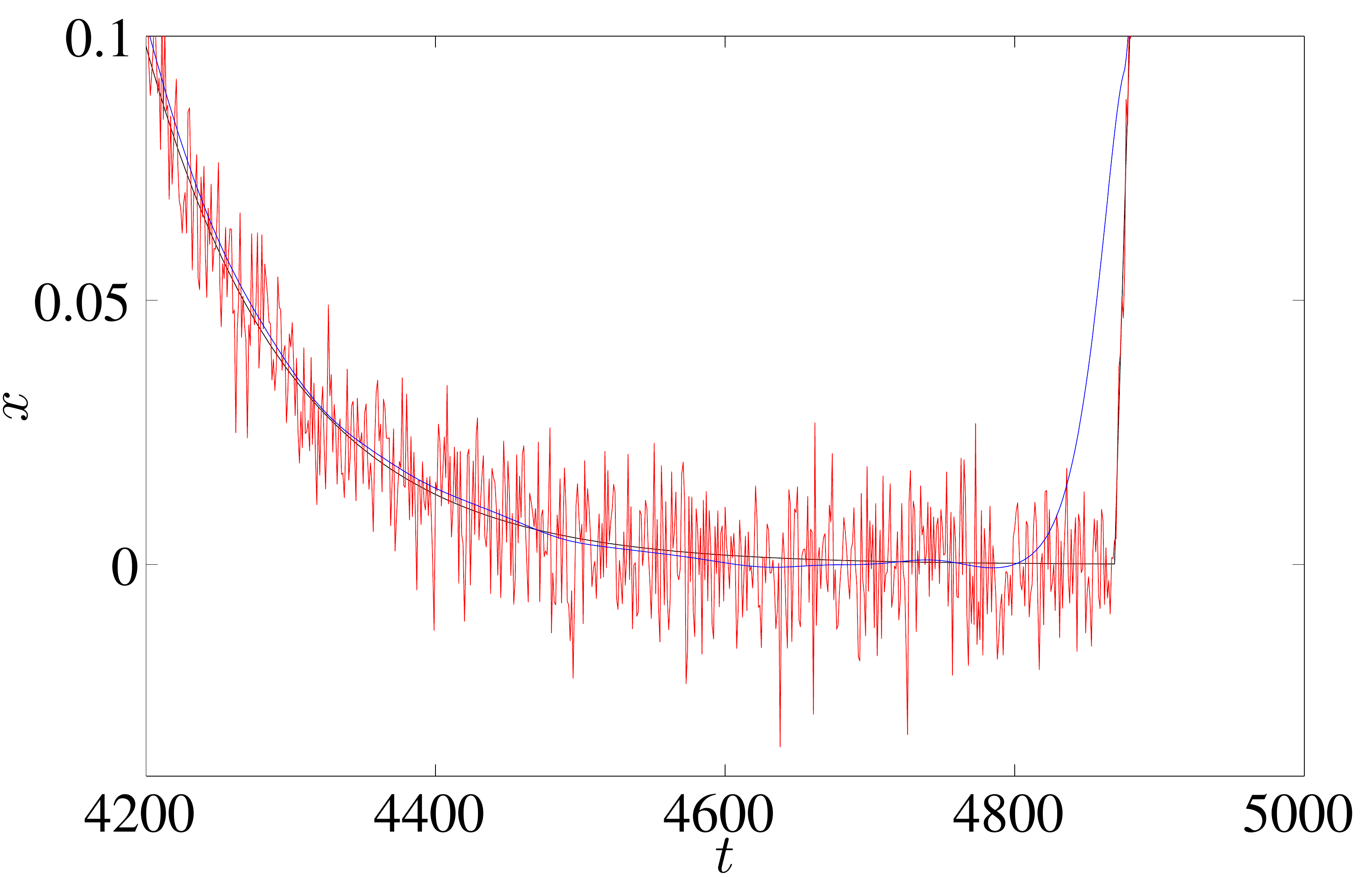}
\label{subfig: denoise4}
}
\caption{Graphical comparison of denoising techniques applied to the data vector $\mathbf{x}$}
\label{fig: denoise}
\end{figure}

\autoref{fig: denoise} also provides a graphical explanation for the deficiency of LWPR and wavelet filtering. The artificial oscillations created by noise are not fully removed (seen in \hyperref[subfig: denoise1]{Figure \ref*{subfig: denoise1}} and \hyperref[subfig: denoise2]{Figure \ref*{subfig: denoise2}}), leading to over-detection of transitions. Butterworth filtering (\hyperref[subfig: denoise3]{Figure \ref*{subfig: denoise3}}) both removes the artifacts of noise and captures the rapid state transition, even if slightly late.


\section{Conclusion}
When Buridan's ass is modeled as a dynamical system with a discrete-time, discrete-state Markov process, there are several effective techniques of parameter estimation. These techniques include method of moments type estimators, likelihood-based estimators, and state detection.  In higher dimensional extensions of this problem, however, the method of moments approach becomes unmanageable. For $n$ states, there are $n(n-1)$ parameters to estimate, and a method of moments approach requires an equal number of invertible, independent statistics. Moreover, deriving expressions for moments such as mean and variance is made difficult by potentially ill-posed partial differential equations. These problematic PDEs also eliminate the possibility of performing simple, likelihood-based estimation.

The state detector is the one method that generalizes to higher dimensions. Furthermore, state detection is possible in any convex geometry. Therefore, we focus on this method to estimate our parameters in the three-state problem. State detection provides us with the best results we hope to achieve from observing an ideal, noiseless system. Unfortunately, an ideal donkey is not a realistic donkey. Adding noise to the system provides a more realistic problem. State detection alone, however, performs poorly in estimating the parameters, as transitions are over-detected in the presence of noise.  Combining our state detector with denoising techniques relieves much of this over-detection. Of the denoising techniques presented, Butterworth filtering and TV lead to the best parameter estimates.

Further research lies in refining the denoising techniques shown, as well as exploring other denoising methods, to produce better estimates of the parameters.  Any of the estimation schemes presented could also be used to inform Markov chain Monte Carlo simulations. Provided with good initial estimates, such simulations can improve convergence by searching over narrower parameter regimes.


\section*{Appendices} \addcontentsline{toc}{section}{Appendices}
\renewcommand{\thesubsection}{\Alph{subsection}}

\subsection{An Alternative Perspective}
Instead of a discrete-time Markov process, we can use a continuous-time Poisson process to model the stochastic switching nature of our dynamical systems. This formulation involves parameters dictating the average time observed between transitions, rather than parameters dictating the probability of transitioning in each unit of time.  Nevertheless, the task of estimating these new parameters presents the same challenges associated with measurement error as before. An implementation of this model would again assume knowledge of the donkey's position at finitely many time steps.

In this alternative perspective, we assume that transitions (e.g.~state 0 to state 1) each describe a Poisson process, so that the elapsed times between transitions follow exponential distributions. For instance, in the case of donkey on a line, a transition to state 0 is followed by some random amount of time $dt_{01}$ (before a transition to state 1) sampled from an exponential distribution with mean parameter $\mu_{01}.$ That is, the random variable $dt_{01}$ has density
\begin{align}
f_{01}(t) = \begin{cases} \frac{1}{\mu_{01}} e^{-\frac{1}{\mu_{01}}t} &\text{if}\quad t \geq 0 \\
0 &\text{if}\quad t < 0. \end{cases} \label{eq1}
\end{align}
Waiting times for transitions from state 1 to state 0 are also governed by an exponential distribution, just with a different mean parameter $\mu_{10}.$

Considering donkey in a triangle, there are two possible transitions from each state. Thus, the dynamical system contains six mean parameters to be estimated. Now, transitions away from a state are viewed as two competing Poisson processes. We can consider two $dt$'s for the two possible transitions, and the transition with the smaller $dt$ will occur.

Generalizing this concept and  \eqref{eq1} to any number of states, we say that waiting times for transitions from state $i$ to state $j$ are taken from the exponential distribution
\begin{align}
f_{ij}(t) = \begin{cases} \frac{1}{\mu_{ij}} e^{-\frac{1}{\mu_{ij}}t} &\text{if}\quad t \geq 0 \\
0 &\text{if}\quad t < 0. \end{cases} \label{eq2}
\end{align}
We will assume $0 < \mu_{ij} < \infty$ for all $i$ and $j.$ From \eqref{eq2} we obtain the cumulative distribution function
\begin{align}
F_{ij}(t) &= \int_{-\infty}^t f_{ij}(s) \ ds \\
&= \int_0^t f_{ij}(s) \ ds \\
&= \int_0^t \frac{1}{\mu_{ij}} e^{-\frac{1}{\mu_{ij}}s} \ ds \\
&= -e^{-\frac{1}{\mu_{ij}}s} \Big|_0^t \\
&= 1 - e^{-\frac{1}{\mu_{ij}}t}.
\end{align}
With probability $F_{ij}(t)$ (assuming the donkey's only possible transition is to state $j$), the donkey spends less than $t$ units of time in state $i$ before making a transition to state $j$; that is, $dt_{ij} < t.$ Rather, $dt_{ij} > t$ with probability 
\begin{align}
1 - F_{ij}(t) = e^{-\frac{1}{\mu_{ij}}}. \label{eq8}
\end{align}

This observation leads to the following property of the Poisson process: For any $s,t > 0,$
\begin{align}
\Pr(dt_{ij} > t + s | dt_{ij} > t) &= \frac{\Pr(dt_{ij} > t + s)}{\Pr(dt_{ij} > t)} \\
&= \frac{e^{-\frac{1}{\mu_{ij}}(t+s)}}{e^{-\frac{1}{\mu_{ij}}t}} \\
&= e^{-\frac{1}{\mu_{ij}}s} \\
&= \Pr(dt_{ij} > s).
\end{align}
Thus, the likelihood of a transition occurring in a given length of time is independent of when the previous transition occurred. This property is the continuous analog of the time independence that characterizes the steps of a discrete Markov process.


\subsubsection{Expected Waiting Times}
As stated before, when more than one type of transition is possible (i.e.~the system has more than two states), we must consider multiple $dt$'s, each one an independent, random sample from a separate exponential distribution. But suppose we wanted to know how long, on average, the donkey spends in a given state. In other words, we wish to determine the expected waiting time for {\it any} transition to occur.

Preserving generality, let there be $n$ states, and let the donkey be in state $i.$ We are interested in how long we wait before observing the next transition: the time elapsed (since the transition to state $i$) will be denoted $dt_i.$ This quantity is given by
\begin{align}
dt_i = \min(dt_{i0},\dots,dt_{i,i-1},dt_{i,i+1},\dots,dt_{i,n-1}).
\end{align}
The probability of having had a transition by time $t$ is
\begin{align}
\Pr(dt_i < t) &= 1 - \Pr(dt_i > t) \\
&= 1 - \Pr(dt_{i0} > t) \cdots \Pr(dt_{i,i-1} > t) \Pr(dt_{i,i+1} > t) \cdots \Pr(dt_{i,n-1} > t) \\
&= 1 - \left(e^{-\frac{1}{\mu_{i0}}t}\right) \cdots \left(e^{-\frac{1}{\mu_{i,i-1}}t}\right) \left(e^{-\frac{1}{\mu_{i,i+1}}t}\right) \cdots \left(e^{-\frac{1}{\mu_{i,n-1}}t}\right) \\
&= 1 - e^{-\left(\frac{1}{\mu_{i0}} + \cdots + \frac{1}{\mu_{i,i-1}} + \frac{1}{\mu_{i,i+1}} + \cdots + \frac{1}{\mu_{i,n-1}}\right)t}.
\end{align}
We recognize this as a cumulative distribution function 
\begin{align}
F_i(t) = 1 - e^{-\left(\sum_{j \neq i} \frac{1}{\mu_{ij}}\right)t}.
\end{align}
Thus, $dt_i$ has density
\begin{align}
f_i(t) &= \frac{d}{dt} F_i \\
&= \sum_{j \neq i} \frac{1}{\mu_{ij}}{e^{-\left(\sum_{j \neq i} \frac{1}{\mu_{ij}}\right)t}}.
\end{align}
This is an exponential distribution with mean parameter 
\begin{align}
\mu_i = \left(\sum_{j \neq i} \frac{1}{\mu_{ij}}\right)^{-1}.
\end{align}
That is, the average waiting time to observe a transition away from state $i$ is $\mu_i$ units of time. Assuming the donkey's state information is detectable, we can empirically observe $\mu_i$ and use such a measurement to estimate the various $\mu_{ij}.$


\subsubsection{Transition Probabilities}
We next ask what the probability is of observing a particular transition: If the donkey is in state $i$, what is the probability $p_{ij}$ that his next transition is to state $j?$ To calculate this probability, we need only to realize that doing so is equivalent to calculating the probability that $dt_{ij} > dt_{ik}$
for all $k \neq i,j.$ Since $\Pr(dt_{ij} = dt_{ik} \text{ for some } j \neq k) = 0,$ we do not worry about simultaneous transitions. We find
\begin{equation}
\begin{split}
p_{ij} = \Pr(dt_{ij} > dt_{ik} \text{ for all } k \neq i,j) &= \int_0^\infty \big[\Pr(dt_{ij}=t) \Pr(dt_{i0} > t) \cdots \\
&\phantom{=\int_0^\infty \big[}\Pr(dt_{i,i-1} > t) \Pr(dt_{i,i+1} > t) \cdots \\
&\phantom{=\int_0^\infty \big[}\Pr(dt_{i,j-1} > t) \Pr(dt_{i,j+1} > t) \cdots \\
&\phantom{=\int_0^\infty \big[}\Pr(dt_{i,n-1} > t)\big]\  dt. \label{eq22} \raisetag{2\baselineskip}
\end{split}
\end{equation}
Using \eqref{eq8}, the integral in \eqref{eq22} can be evaluated to give
\begin{align}
\begin{split}
p_{ij} &= \int_0^\infty \bigg[\left(\frac{1}{\mu_{ij}}e^{-\frac{1}{\mu_{ij}}t}\right)\left(e^{-\frac{1}{\mu_{i0}}t}\right) \cdots
\left(e^{-\frac{1}{\mu_{i,i-1}}t}\right)\left(e^{-\frac{1}{\mu_{i,i+1}}t}\right) \cdots \\
&\phantom{=\int_0^\infty \bigg[} \left(e^{-\frac{1}{\mu_{i,j-1}}t}\right)\left(e^{-\frac{1}{\mu_{i,j+1}}t}\right) \cdots
\left(e^{-\frac{1}{\mu_{i,n-1}}t}\right) \bigg]
\end{split} \\
&= \frac{1}{\mu_{ij}} \int_0^\infty e^{-\left(\sum_{k \neq i} \frac{1}{\mu_{ik}}\right)t} \ dt \\
&= -\frac{1}{\mu_{ij}} \left(\sum_{k \neq i} \frac{1}{\mu_{ik}}\right)^{-1} e^{-\left(\sum_{k \neq i} \frac{1}{\mu_{ik}}\right)t} \Bigg|_0^\infty \\
&= \frac{\mu_i}{\mu_{ij}}.
\end{align}
That is, we expect 
\begin{align}
p_{ij} = \frac{\mu_i}{\mu_{ij}} \label{eq27}
\end{align}
of the transitions from state $i$ to be to state $j.$ Again, assuming the donkey's state information is detectable, we can measure $p_{ij}$ (very similar to $\tau_{ij}$ in the Markov chain formulation) and use it to estimate $\mu_{ij}.$


\subsubsection{Parameter Estimation}
We have nearly derived an expression to estimate the various $\mu_{ij}$ parameters. We now simply solve \eqref{eq27} for $\mu_{ij}$ to yield
\begin{align}
\mu_{ij} = \frac{\mu_i}{p_{ij}}.
\end{align}
Recall that $\mu_i$ and $p_{ij}$ are both easily measured given state information, so the empirical $\mu_{ij}$ can be calculated.


\subsection{Eigenvector of Markov Matrix with Eigenvalue 1} \label{app: eigenvector}
As presented in \ref{Donkey on a Line: The Markov Process} and \ref{Donkey in a Triangle: The Markov Process}, the normalized eigenvector associated with eigenvalue 1 of the $2\times2$
$$\begin{bmatrix}
1-\tau_{01}&\tau_{10} \\
\tau_{01}&1-\tau_{10}
\end{bmatrix}$$
and the $3\times3$
$$\begin{bmatrix}
1-\tau_{01}-\tau_{02}&\tau_{10}&\tau_{20} \\
\tau_{01}&1-\tau_{10}-\tau_{12}&\tau_{21} \\
\tau_{02}&\tau_{12}&1-\tau_{20}-\tau_{21}
\end{bmatrix}$$
Markov matrices are known. In fact, they can be calculated using standard techniques. With increasing size of the Markov matrix, however, these techniques fail by theoretical necessity: Determining the eigenvalues of an arbitrary $n\times n$ matrix requires solving an $n$th degree polynomial, for which a general solution cannot exist for $n\geq 5$ due to the Abel-Ruffini theorem \citep{rotman}.

We are only interested in determining the eigenvector associated with eigenvalue 1. Here we derive a formula to calculate this eigenvector given an $n\times n$ Markov matrix. The following theorem and proof are taken from \cite{eigenvector}. The result is shown for a general column stochastic matrix.

\begin{theorem} \label{th1}
If $A$ is an $n \times n$ column stochastic matrix, then $A$ has an eigenvalue of 1 and associated eigenvector $\mathbf{v}$ given by 
\begin{align}
v_k = \det(M_{k,k}), \label{eq29}
\end{align}
where $M=A-I$ and $M_{k,k}$ is the matrix formed from $M$ by removing its $k$th row and its $k$th column.
\end{theorem}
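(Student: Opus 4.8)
The plan is to interpret the prescription $v_k = \det(M_{k,k})$ through the classical adjugate (adjoint) matrix $\operatorname{adj}(M)$, whose $(i,j)$ entry is the cofactor $C_{ji} = (-1)^{i+j}\det(M_{j,i})$, and to exploit the fundamental identity $M\operatorname{adj}(M) = \operatorname{adj}(M)\,M = \det(M)\,I$. The numbers $\det(M_{k,k})$ are exactly the diagonal entries of $\operatorname{adj}(M)$, so the whole theorem should follow once the structure of $\operatorname{adj}(M)$ is understood.

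First I would establish the eigenvalue claim together with the singularity of $M$. Because $A$ is column stochastic, each of its columns sums to $1$, which is precisely the statement $\mathbf{1}^{\top}A = \mathbf{1}^{\top}$, where $\mathbf{1}$ is the all-ones vector. Hence $\mathbf{1}^{\top}M = \mathbf{1}^{\top}(A-I) = \mathbf{0}^{\top}$, so the columns of $M$ sum to zero and $\det(M)=0$; equivalently $1$ is an eigenvalue of $A$. Feeding $\det(M)=0$ into the adjugate identity then gives both $M\operatorname{adj}(M) = 0$ and $\operatorname{adj}(M)\,M = 0$.

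The crux is to show that $\operatorname{adj}(M) = \mathbf{v}\mathbf{1}^{\top}$, that is, that every row of $\operatorname{adj}(M)$ is constant with common value equal to its diagonal entry. From $\operatorname{adj}(M)\,M = 0$, each row of $\operatorname{adj}(M)$ is a left null vector of $M$. Assuming $\operatorname{rank}(M) = n-1$ (the case of interest; otherwise every $(n-1)\times(n-1)$ minor vanishes and \eqref{eq29} simply returns the trivial vector), the left null space is one-dimensional and, by the first step, spanned by $\mathbf{1}^{\top}$. Therefore each row of $\operatorname{adj}(M)$ is a scalar multiple of $\mathbf{1}^{\top}$, so all its entries equal the diagonal entry $\operatorname{adj}(M)_{kk} = C_{kk} = \det(M_{k,k}) = v_k$, yielding $\operatorname{adj}(M) = \mathbf{v}\mathbf{1}^{\top}$ exactly as asserted by \eqref{eq29}.

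Finally I would read off the eigenvector property from the other half of the adjugate identity: $0 = M\operatorname{adj}(M) = M\mathbf{v}\mathbf{1}^{\top} = (M\mathbf{v})\mathbf{1}^{\top}$, and since $\mathbf{1}^{\top}\neq \mathbf{0}^{\top}$ this forces $M\mathbf{v} = \mathbf{0}$, i.e. $A\mathbf{v} = \mathbf{v}$. The main obstacle is the middle step: the quantities $\det(M_{k,k})$ lie on the \emph{diagonal} of $\operatorname{adj}(M)$ rather than along a single column, so the generic fact that the columns of $\operatorname{adj}(M)$ sit in $\ker M$ is by itself not enough. What rescues the argument is the distinguished left null vector $\mathbf{1}^{\top}$ coming from column stochasticity, which forces the rows of $\operatorname{adj}(M)$ to be constant and thereby identifies its diagonal with a genuine null vector. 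As a sanity check I would verify the bookkeeping on the $2\times2$ matrix of \ref{Donkey on a Line: The Markov Process}, where $\operatorname{adj}(M)$ has rows $(-\tau_{10},-\tau_{10})$ and $(-\tau_{01},-\tau_{01})$, recovering $\mathbf{v}\propto(\tau_{10},\tau_{01})^{\top}$.
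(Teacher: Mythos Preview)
Your argument is correct and genuinely different from the paper's. The paper proceeds by direct determinant manipulation: it fixes a row index (say $i=1$), writes out $\sum_{j} m_{1,j}\det M_{j,j}$, and in each term with $j\neq 1$ replaces the first row of $M_{j,j}$ by minus the sum of the remaining rows of $M$ (using that columns of $M$ sum to zero). After row operations and sign bookkeeping this collapses to the cofactor expansion $\sum_j (-1)^{1+j} m_{1,j}\det M_{1,j}=\det M=0$. In effect the paper re-derives, by hand and for this specific matrix, a consequence of the adjugate identity.

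Your route packages all of that into $M\operatorname{adj}(M)=\operatorname{adj}(M)\,M=\det(M)\,I=0$ and then adds one structural observation the paper never makes explicit: because $\mathbf{1}^{\top}$ spans the left null space when $\operatorname{rank} M = n-1$, the rows of $\operatorname{adj}(M)$ are constant, so its diagonal coincides with any of its columns. This is shorter and more conceptual, and it explains \emph{why} the diagonal cofactors assemble into a right null vector rather than merely verifying it. The price is a rank case-split, which you handle correctly (if $\operatorname{rank} M\le n-2$ every $(n-1)\times(n-1)$ minor vanishes and $\mathbf{v}=\mathbf{0}$); the paper's computation avoids this split and yields $M\mathbf{v}=\mathbf{0}$ unconditionally, though of course neither argument can prevent $\mathbf{v}$ from being zero in the degenerate case.
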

\begin{proof}
We wish to show
\begin{align}
A\mathbf{v} &= \mathbf{v} \\
\left(A-I\right)\mathbf{v} &= \mathbf{0} \\
M\mathbf{v} &= \mathbf{0},
\end{align}
or equivalently
\begin{align}
\sum_{j = 1}^{n} m_{i,j} v_j &= 0 \\
\sum_{j = 1}^n m_{i,j} \det M_{j,j} &= 0 \label{eq34}
\end{align}
for $i = 1,\dots,n,$ where $m_{i,j}$ is the entry of $M$ in the $i$th row and the $j$th column. We will show \eqref{eq34} only for $i = 1,$ as the remaining $n-1$ cases are analogous.

First notice that the rows of $M$ are linearly dependent (each column sums to 0, so any row is the negative of the sum of all other rows), implying $\det M = 0.$ We also have
\begin{align}
&\sum_{j = 1}^n m_{1,j} \det M_{j,j} =
m_{1,1} \begin{vmatrix}
m_{2,2} & \cdots & m_{2,n} \\
\vdots & \ddots & \vdots \\
m_{n,2} & \cdots & m_{n,n}
\end{vmatrix}
+ \cdots +
m_{1,n} \begin{vmatrix}
m_{1,1} & \cdots & m_{1,n-1} \\
\vdots & \ddots & \vdots \\
m_{n-1,1} & \cdots & m_{n-1,n-1}
\end{vmatrix} \\
\begin{split}
&= m_{1,1} \begin{vmatrix}
m_{2,2} & \cdots & m_{2,n} \\
\vdots & \ddots & \vdots \\
m_{n,2} & \cdots & m_{n,n}
\end{vmatrix}
+ m_{1,2}\begin{vmatrix}
-\sum_{j=2}^n m_{j,1} & -\sum_{j=2}^n m_{j,3} & \cdots & -\sum_{j=2}^n m_{j,n}  \\
m_{3,1} & m_{3,3} & \cdots & m_{3,n} \\
\vdots & \vdots & \ddots & \vdots \\
m_{n,1} & m_{n,3} & \cdots & m_{n,n}
\end{vmatrix} \\
&\phantom{=}+ \cdots +
m_{1,n} \begin{vmatrix}
-\sum_{j=2}^n m_{j,1} & -\sum_{j=2}^n m_{j,2} & \cdots & -\sum_{j=2}^n m_{j,n-1} \\
m_{2,1} & m_{2,2} & \cdots & m_{2,n-1} \\
\vdots & \vdots & \ddots & \vdots \\
m_{n-1,1} & m_{n-1,2} & \cdots & m_{n-1,n-1}
\end{vmatrix},
\end{split}
\end{align}
where, in the last $n-1$ terms of the sum, we have expressed the first row of the matrix as the negative of the sum of the last $n-1$ rows (with the appropriate columns omitted) of $M.$ In computing the determinants of these matrices, we now remove the negative from the first row and split the sums:
\begin{align}
\begin{split}
\sum_{j = 1}^n m_{1,j} \det M_{j,j} &= 
m_{1,1} \begin{vmatrix}
m_{2,2} & \cdots & m_{2,n} \\
\vdots & \ddots & \vdots \\
m_{n,2} & \cdots & m_{n,n}
\end{vmatrix}
- m_{1,2} \sum_{j=2}^n \begin{vmatrix}
m_{j,1} & m_{j,3} & \cdots & m_{j,n}  \\
m_{3,1} & m_{3,3} & \cdots & m_{3,n} \\
\vdots & \vdots & \ddots & \vdots \\
m_{n,1} & m_{n,3} & \cdots & m_{n,n}
\end{vmatrix} \\
&\phantom{=} - \cdots -
m_{1,n} \sum_{j=2}^n \begin{vmatrix}
m_{j,1} & m_{j,2} & \cdots & m_{j,n-1} \\
m_{2,1} & m_{2,2} & \cdots & m_{2,n-1} \\
\vdots & \vdots & \ddots & \vdots \\
m_{n-1,1} & m_{n-1,2} & \cdots & m_{n-1,n-1}
\end{vmatrix}.
\end{split} \label{eq: detsums}
\end{align}
In each of the $n-1$ sums on the right-hand side of \eqref{eq: detsums}, only one $j$ yields a first row that is not identical to one of the other rows (in particular, for the sum following $m_{1,k},$ this $j$ equals $k$). As all other $j$ yield a determinant of 0, each sum reduces to a single term:
\begin{align}
\begin{split}
\sum_{j = 1}^n m_{1,j} \det M_{j,j} &= 
m_{1,1} \begin{vmatrix}
m_{2,2} & \cdots & m_{2,n} \\
\vdots & \ddots & \vdots \\
m_{n,2} & \cdots & m_{n,n}
\end{vmatrix}
- m_{1,2} \begin{vmatrix}
m_{2,1} & m_{2,3} & \cdots & m_{2,n}  \\
m_{3,1} & m_{3,3} & \cdots & m_{3,n} \\
\vdots & \vdots & \ddots & \vdots \\
m_{n,1} & m_{n,3} & \cdots & m_{n,n}
\end{vmatrix} \\
&\phantom{=} - \cdots -
m_{1,n} \begin{vmatrix}
m_{n,1} & m_{n,2} & \cdots & m_{n,n-1} \\
m_{2,1} & m_{2,2} & \cdots & m_{2,n-1} \\
\vdots & \vdots & \ddots & \vdots \\
m_{n-1,1} & m_{n-1,2} & \cdots & m_{n-1,n-1}
\end{vmatrix}.
\end{split} \label{eq38}
\end{align}
We now shift down the first row of the last $n-2$ matrices in \eqref{eq38} so that rows are in order (for the $m_{1,k}$ term, this requires $k-2$ transpositions, or ``flips," of rows). Furthermore, each transposition changes the sign of the determinant, so we have
\begin{align}
\begin{split}
\sum_{j = 1}^n m_{1,j} \det M_{j,j} &= 
m_{1,1} \begin{vmatrix}
m_{2,2} & \cdots & m_{2,n} \\
\vdots & \ddots & \vdots \\
m_{n,2} & \cdots & m_{n,n}
\end{vmatrix}
+ (-1)^{1} m_{1,2} \begin{vmatrix}
m_{2,1} & m_{2,3} & \cdots & m_{2,n}  \\
m_{3,1} & m_{3,3} & \cdots & m_{3,n} \\
\vdots & \vdots & \ddots & \vdots \\
m_{n,1} & m_{n,3} & \cdots & m_{n,n}
\end{vmatrix} \\
&\phantom{=} + \cdots + (-1)^{n-1}
m_{1,n} \begin{vmatrix}
m_{2,1} & m_{2,2} & \cdots & m_{2,n-1} \\
\vdots & \vdots & \ddots & \vdots \\
m_{n-1,1} & m_{n-1,2} & \cdots & m_{n-1,n-1} \\
m_{n,1} & m_{n,2} & \cdots & m_{n,n-1}
\end{vmatrix}
\end{split} \label{eq39} \\
\begin{split} &=
(-1)^{1+1} m_{1,1} \begin{vmatrix}
m_{2,2} & \cdots & m_{2,n} \\
\vdots & \ddots & \vdots \\
m_{n,2} & \cdots & m_{n,n}
\end{vmatrix} \\
&\phantom{=}+ (-1)^{1+2} m_{1,2} \begin{vmatrix}
m_{2,1} & m_{2,3} & \cdots & m_{2,n}  \\
m_{3,1} & m_{3,3} & \cdots & m_{3,n} \\
\vdots & \vdots & \ddots & \vdots \\
m_{n,1} & m_{n,3} & \cdots & m_{n,n}
\end{vmatrix} \\
&\phantom{=} + \cdots + (-1)^{1+n}
m_{1,n} \begin{vmatrix}
m_{2,1} & m_{2,2} & \cdots & m_{2,n-1} \\
\vdots & \vdots & \ddots & \vdots \\
m_{n-1,1} & m_{n-1,2} & \cdots & m_{n-1,n-1} \\
m_{n,1} & m_{n,2} & \cdots & m_{n,n-1}
\end{vmatrix}
\end{split} \\
&= \sum_{j = 1}^n (-1)^{1+j} m_{1,j} \det M_{1,j}. \label{eq41}
\end{align}
We recognize \eqref{eq41} as Laplace's formula by cofactor expansion for the determinant \citep{treil}, so we have
\begin{align}
\sum_{j = 1}^n m_{1,j} \det M_{1,j} &= \sum_{j = 1}^n (-1)^{1+j} m_{1,j} \det M_{1,j} \\
&= \det M \label{eq42} \\
&= 0.
\end{align}
\end{proof}


\subsection{A Conjecture Regarding Eigenvectors} \label{app: joseph}
\autoref{th1} provides a way of computing the eigenvector corresponding to the eigenvalue 1 of the $n\times n$ Markov matrix
\begin{align}
A = \scalebox{0.9}{$\begin{bmatrix}
1 - \tau_{01} -  \cdots - \tau_{0,n-1} & \tau_{10} & \cdots & \tau_{n-1,0} \\
\tau_{01} & 1 - \tau_{10} - \tau_{12} - \cdots - \tau_{1,n-1} & \cdots & \tau_{n-1,1} \\
\vdots & \vdots & \ddots & \vdots \\
\tau_{0,n-1} & \tau_{1,n-1} & \cdots & 1 - \tau_{n-1,0} - \cdots - \tau_{n-1,n-2}
\end{bmatrix}$}. \label{eq44}
\end{align}
Nevertheless, the computation \eqref{eq29} has increasing complexity with increasing $n.$ For example, using MATLAB\textsuperscript{\textregistered}, we were able to compute the desired eigenvector only for $n \leq 6.$ For $n > 6,$ the computation proved too complex to execute. To gain a sense of this complexity, we examined the growth of the number of summands in a single application of \eqref{eq29}.

For example, in the $2\times 2$ case, $A$ has the normalized eigenvector
$$\mathbf{v} = \begin{bmatrix}
\dfrac{\tau_{10}}{\tau_{01}+\tau_{10}} \\[0.4cm]
\dfrac{\tau_{01}}{\tau_{01}+\tau_{10}}
\end{bmatrix}.$$
In the $3\times 3$ case,
$$\mathbf{v} = \begin{bmatrix}
\dfrac{\tau_{21}\tau_{10} + \tau_{12}\tau_{20} + \tau_{10}\tau_{20}}{\tau_{21}\tau_{10} + \tau_{12}\tau_{20} + \tau_{10}\tau_{20} + \tau_{20}\tau_{01} + \tau_{02}\tau_{21} + \tau_{01}\tau_{21} + \tau_{10}\tau_{02} + \tau_{01}\tau_{12} + \tau_{02}\tau_{12}} \\[0.4cm]
\dfrac{\tau_{20}\tau_{01} + \tau_{02}\tau_{21} + \tau_{01}\tau_{21}}{\tau_{21}\tau_{10} + \tau_{12}\tau_{20} + \tau_{10}\tau_{20} + \tau_{20}\tau_{01} + \tau_{02}\tau_{21} + \tau_{01}\tau_{21} + \tau_{10}\tau_{02} + \tau_{01}\tau_{12} + \tau_{02}\tau_{12}} \\[0.4cm]
\dfrac{\tau_{10}\tau_{02} + \tau_{01}\tau_{12} + \tau_{02}\tau_{12}}{\tau_{21}\tau_{10} + \tau_{12}\tau_{20} + \tau_{10}\tau_{20} + \tau_{20}\tau_{01} + \tau_{02}\tau_{21} + \tau_{01}\tau_{21} + \tau_{10}\tau_{02} + \tau_{01}\tau_{12} + \tau_{02}\tau_{12}}
\end{bmatrix}. $$
With four states, the eigenvector (not normalized) becomes
$$\mathbf{v} = \scalebox{0.94}{$\begin{bmatrix}
\tau_{10}\tau_{20}\tau_{30} + \tau_{10}\tau_{20}\tau_{31} + \tau_{10}\tau_{21}\tau_{30} + \tau_{10}\tau_{20}\tau_{32} + \tau_{10}\tau_{21}\tau_{31} + \tau_{12}\tau_{20}\tau_{30} + \tau_{10}\tau_{21}\tau_{32} + \tau_{10}\tau_{23}\tau_{30} \\ + \tau_{12}\tau_{20}\tau_{31} + \tau_{13}\tau_{20}\tau_{30} + \tau_{10}\tau_{23}\tau_{31} + \tau_{12}\tau_{20}\tau_{32} + \tau_{13}\tau_{21}\tau_{30} + \tau_{12}\tau_{23}\tau_{30} + \tau_{13}\tau_{20}\tau_{32} + \tau_{13}\tau_{23}\tau_{30} \\[0.3cm]
\tau_{01}\tau_{20}\tau_{30 } + \tau_{01}\tau_{20}\tau_{31 } + \tau_{01}\tau_{21}\tau_{30 } + \tau_{01}\tau_{20}\tau_{32 } + \tau_{01}\tau_{21}\tau_{31 } + \tau_{02}\tau_{21}\tau_{30 } + \tau_{01}\tau_{21}\tau_{32 } + \tau_{01}\tau_{23}\tau_{30 } \\ + \tau_{02}\tau_{21}\tau_{31 } + \tau_{03}\tau_{20}\tau_{31 } + \tau_{01}\tau_{23}\tau_{31 } + \tau_{02}\tau_{21}\tau_{32 } + \tau_{03}\tau_{21}\tau_{31 } + \tau_{02}\tau_{23}\tau_{31 } + \tau_{03}\tau_{21}\tau_{32 } + \tau_{03}\tau_{23}\tau_{31} \\[0.3cm]
\tau_{02}\tau_{10}\tau_{30 } + \tau_{01}\tau_{12}\tau_{30 } + \tau_{02}\tau_{10}\tau_{31 } + \tau_{01}\tau_{12}\tau_{31 } + \tau_{02}\tau_{10}\tau_{32 } + \tau_{02}\tau_{12}\tau_{30 } + \tau_{01}\tau_{12}\tau_{32 } + \tau_{02}\tau_{12}\tau_{31 } \\ + \tau_{02}\tau_{13}\tau_{30 } + \tau_{03}\tau_{10}\tau_{32 } + \tau_{01}\tau_{13}\tau_{32 } + \tau_{02}\tau_{12}\tau_{32 } + \tau_{03}\tau_{12}\tau_{31 } + \tau_{02}\tau_{13}\tau_{32 } + \tau_{03}\tau_{12}\tau_{32 } + \tau_{03}\tau_{13}\tau_{32} \\[0.3cm]
\tau_{03}\tau_{10}\tau_{20 } + \tau_{01}\tau_{13}\tau_{20 } + \tau_{03}\tau_{10}\tau_{21 } + \tau_{01}\tau_{13}\tau_{21 } + \tau_{02}\tau_{10}\tau_{23 } + \tau_{03}\tau_{12}\tau_{20 } + \tau_{01}\tau_{12}\tau_{23 } + \tau_{02}\tau_{13}\tau_{21 } \\ + \tau_{03}\tau_{10}\tau_{23 } + \tau_{03}\tau_{13}\tau_{20 } + \tau_{01}\tau_{13}\tau_{23 } + \tau_{02}\tau_{12}\tau_{23 } + \tau_{03}\tau_{13}\tau_{21 } + \tau_{02}\tau_{13}\tau_{23 } + \tau_{03}\tau_{12}\tau_{23 } + \tau_{03}\tau_{13}\tau_{23}
\end{bmatrix}$}.$$

It is easy to see from \eqref{eq29} that for a given $n,$ each entry of the eigenvector $\mathbf{v}$ will contain the same number of summands before normalizing. Counting the number of monomials in each numerator after normalizing, we see a possible pattern, illustrated in \autoref{tab: joseph}.

\begin{table}[ht]  \centering
\caption{Number of monomials in a coordinate of the eigenvector corresponding to eigenvalue 1 of Markov matrix}
\begin{tabular}{|c|c|l|} \cline{1-2}
\textbf{Number of States} & \textbf{Terms in Numerators} & \multicolumn{1}{|}{}\\ \hline
2 & 1 & $=2^0$ \\ \hline
3 & 3 & $=3^1$ \\ \hline
4 & 16 & $=4^2$ \\ \hline
5 & 125 & $=5^3$ \\ \hline
6 & 1296 & $=6^4$ \\ \hline
\end{tabular} \label{tab: joseph}
\end{table}

\noindent We are lead to the following conjecture.
\begin{conjecture} \label{conj: joseph}
If $A$ is the $n\times n$ matrix  given by \eqref{eq44}, then each coordinate of the eigenvector $\mathbf{v}$ given by \eqref{eq29}, when expressed in terms of the various $\tau_{ij},$ is the sum of $n^{n-2}$ monomials, each with coefficient 1.
\end{conjecture}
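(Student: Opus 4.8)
The plan is to recognize the determinant \eqref{eq29} as an instance of the weighted directed Matrix-Tree Theorem (equivalently, the Markov Chain Tree Theorem) and then to count the relevant trees with Cayley's formula. First I would attach to $A$ a weighted directed graph $G$ on the vertex set $\{0,1,\dots,n-1\}$, placing an edge from $i$ to $j$ of weight $\tau_{ij}$ for every ordered pair $i \neq j$. The matrix $M = A - I$ is then, up to transpose and an overall sign, the weighted Laplacian of $G$: its diagonal entry in the row of state $i$ is $-\sum_{j \neq i}\tau_{ij}$, the negative out-weight of vertex $i$, while its off-diagonal entries encode the edge weights. Deleting the $k$th row and column to form $M_{k,k}$ is exactly the passage to the \emph{reduced} Laplacian with vertex $k$ singled out as a root, so $v_k = \det(M_{k,k})$ is a reduced-Laplacian determinant of $G$.

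Next I would apply the directed Matrix-Tree Theorem, which expresses such a determinant as
\begin{align}
\det(M_{k,k}) = (-1)^{n-1} \sum_{T}\ \prod_{(i\to j)\in T} \tau_{ij}, \label{eq: mtt}
\end{align}
where the sum ranges over all spanning arborescences $T$ of $G$ directed toward the root $k$ (every vertex other than $k$ has exactly one outgoing edge, and following edges always leads to $k$). The $n=3$ coordinate $v_0 = \tau_{10}\tau_{20} + \tau_{10}\tau_{21} + \tau_{12}\tau_{20}$ illustrates the identity: its three terms are precisely the three arborescences $\{1\to 0,\,2\to 0\}$, $\{1\to 0,\,2\to 1\}$, and $\{2\to 0,\,1\to 2\}$ rooted at $0$. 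Two features of \eqref{eq: mtt} give the structure claimed in the conjecture. Each arborescence has exactly $n-1$ edges, so every monomial has degree $n-1$; and since each non-root vertex contributes a single outgoing edge, the monomial is squarefree and its edge set, hence the tree, is recoverable from it. Thus distinct arborescences yield distinct monomials, no cancellation occurs, and every surviving coefficient equals the common sign $(-1)^{n-1}$. Normalizing the eigenvector by this global sign, which is permissible since eigenvectors are defined only up to a scalar, leaves every coefficient equal to $1$.

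It then remains to count the arborescences. Because all $\tau_{ij}$ appear as independent variables, $G$ is the complete digraph on $n$ vertices, so every undirected spanning tree of $K_n$ orients uniquely toward the fixed root $k$, and conversely every arborescence rooted at $k$ forgets its orientation to a spanning tree of $K_n$. This is a bijection, so the number of summands equals the number of labeled spanning trees of $K_n$, which is $n^{n-2}$ by Cayley's formula. This both proves the count and explains the entries of \autoref{tab: joseph}.

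The main obstacle is not the counting but pinning down conventions in \eqref{eq: mtt}: one must verify that the minor $\det(M_{k,k})$, rather than some transpose or sibling Laplacian, matches arborescences oriented \emph{toward} $k$ as opposed to away from it, and that each surviving term carries the single sign $(-1)^{n-1}$. I would fix these by checking the $n=2$ and $n=3$ cases against the displayed eigenvectors, as above, which determines the orientation and sign unambiguously; note that for even $n$ this sign is negative, so the all-positive eigenvectors shown in the excerpt are the sign-normalized representatives of \eqref{eq29}. If one prefers a self-contained argument rather than citing the directed Matrix-Tree Theorem, the identity \eqref{eq: mtt} can be proved by induction on $n$ via cofactor expansion along the column of a fixed non-root vertex $i$, whose off-diagonal entries are exactly the weights $\tau_{ij}$ of the edges leaving $i$, grouping the expansion by which single out-edge $i$ uses; this mirrors the deletion-contraction recursion for arborescences and is the only genuinely laborious step.
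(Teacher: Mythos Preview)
Your approach is correct and, notably, goes well beyond what the paper accomplishes. In the paper this statement is left as a \emph{conjecture}: the authors do not prove it but instead sketch a heuristic via the Leibniz formula \eqref{eq45}, grouping permutations by their fixed-point count to arrive at the expression
\[
\mathcal{R} = (-1)^{n-1}\sum_{\sigma \in S_{n-1}} (\sgn \sigma)\, (-1)^{f(\sigma)}(n-1)^{f(\sigma)},
\]
which they then hope equals $n^{n-2}$. Their derivation rests on an explicitly flagged and unverified assumption---that monomials of opposite sign cancel completely---and they do not prove the resulting permutation identity either.

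Your route via the directed Matrix-Tree Theorem and Cayley's formula is genuinely different and strictly stronger: it is a complete proof rather than a heuristic. It also buys structural insight the paper's approach lacks, since it identifies each monomial with a concrete combinatorial object (an arborescence rooted at $k$), which simultaneously explains why the monomials are squarefree of degree $n-1$, why no cancellation occurs, and why the count is $n^{n-2}$. Your handling of the sign $(-1)^{n-1}$ matches the paper's own observation that for even $n$ the raw determinant \eqref{eq29} yields negative coefficients. The only step you defer is the proof of the Matrix-Tree identity itself, but your proposed inductive cofactor expansion along a non-root column is the standard argument and would close that gap cleanly.
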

If this conjecture is true, a derivation may proceed as follows. We let $\mathcal{R}$ be the number of monomials in a coordinate of $\mathbf{v},$ the number we conjecture is equal to $n^{n-2}.$ Let $M = A - I$ as in \hyperref[app: eigenvector]{Appendix \ref*{app: eigenvector}}. As prescribed by \autoref{th1}, to compute the $k$th entry of $\mathbf{v},$ we must calculate $\det M_{k,k}.$ To do so, we will use the Leibniz formula \citep{treil}, presented here for a general matrix $B$:
\begin{align}
\det B = \sum_{\sigma \in S_n} \sgn \sigma \prod_{i=1}^n b_{i,\sigma(i)}, \label{eq45}
\end{align}
where $S_n$ is the symmetric group on $n$ elements, and $b_{i,j}$ denotes an entry in $B.$

In applying this formula, for example, to
$$M_{n,n} = \scalebox{0.91}{$\begin{bmatrix}
- \tau_{01} -  \cdots - \tau_{0,n-1} & \tau_{10} & \cdots & \tau_{n-2,0} \\
\tau_{01} & - \tau_{10} - \tau_{12} - \cdots - \tau_{1,n-1} & \cdots & \tau_{n-2,1} \\
\vdots & \vdots & \ddots & \vdots \\
\tau_{0,n-2} & \tau_{1,n-2} & \cdots & - \tau_{n-2,0} - \cdots - \tau_{n-2,n-3} - \tau_{n-2,n-1}
\end{bmatrix}$},$$
we notice that
\begin{enumerate}
\item Each term in the sum of \eqref{eq45} is a product of $n-1$ entries of $M_{n,n},$ exactly one from each row and column.
\item In a particular summand of \eqref{eq45}, the sign of each monomial produced is the same and is determined by two things. First, each diagonal entry in the product forming the monomial contributes a power of $-1.$ Second, the signum of the permutation involved could contribute an additional factor of $-1.$
\item Diagonal entries being involved in a particular summand of \eqref{eq45} is equivalent to the permutation involved having a fixed point.
\item The number of diagonal entries involved in a particular summand of \eqref{eq45} determines how many monomials the summand contains. In particular, if $f(\sigma)$ denotes the number of fixed points of the permutation $\sigma,$ then the associated summand will contain $(n-1)^{f(\sigma)}$ monomials.
\end{enumerate}

The assumption we now make, albeit possibly incorrect, is that monomials of opposite sign will cancel to leave only monomials with identical sign. In this case, the four observations above imply
\begin{align}
\mathcal{R} = (-1)^{n-1}\sum_{\sigma \in S_n} (\sgn \sigma) (-1)^{f(\sigma)}(n-1)^{f(\sigma)}. \label{eq: final}
\end{align}
The factor of $(-1)^{n-1}$ in \eqref{eq: final} ensures $\mathcal{R}$ is always positive. If $n$ is even, then the monomials comprising $\mathbf{v},$ as determined by \eqref{eq29}, have negative coefficients. This fact is seen by noting that the majority of monomials come from the summand associated with the identity permutation---the summand in \eqref{eq45} that multiplies all diagonal elements. Since the identity permutation has positive signum, the sign of the monomials in this summand is entirely determined by $n$. If $n$ is even, then $n-1$ is odd, so the sign of these monomials is $(-1)^{n-1},$ as there are $n-1$ diagonal entries.

Provided our assumption is correct, proving \hyperref[conj: joseph]{Conjecture \ref*{conj: joseph}} is now reduced to showing the right-hand side of \eqref{eq: final} is equal to $n^{n-2}.$ The sum in \eqref{eq: final} may be easier to manipulate if permutations were grouped by cycle structure.


\subsection{The Growth of Cumulative Power} \label{app: power}
In \cite{keaton}, the author shows that, for finite sums of sines and cosines, cumulative power \eqref{eq: cumpow} is $\Ot$. Here we prove the same growth relationship for the cumulative power \eqref{eq: cumpow2} of the donkey's position, $x$, in the one-dimensional case.
\begin{theorem}
If $x$ obeys \eqref{eq: state0} and \eqref{eq: state1} and switches between the two states by the Markov process \eqref{eq: markov}, then its cumulative power \eqref{eq: cumpow2} is $\Ot$.
\end{theorem}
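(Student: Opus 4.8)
The plan is to bound the integrand $\bigl(x''(s)\bigr)^2$ uniformly in $s$ and then integrate, since a uniform pointwise bound on the integrand immediately yields linear growth of $F$. First I would establish that the donkey's position stays strictly between the two troughs, i.e.\ that $0 < x(s) < 1$ for all $s \geq 0$. This follows directly from the dynamics \eqref{eq: state0} and \eqref{eq: state1}: starting from $x = 0.5$, in state 0 the position obeys $\frac{dx}{dt} = -vx$ and decays toward (but never reaches) $0$, while in state 1 it obeys $\frac{dx}{dt} = v(1-x)$ and grows toward (but never reaches) $1$. A short induction over the successive intervals between state transitions then confines $x$ to the open interval $(0,1)$ for all time.

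With this confinement in hand, I would appeal to the second-derivative computations already recorded in \eqref{eq: power1} and \eqref{eq: power2}: on any interval where the donkey is in state 0 we have $\bigl(x''\bigr)^2 = v^4 x^2$, and on any interval where he is in state 1 we have $\bigl(x''\bigr)^2 = v^4 (1-x)^2$. Since $0 < x < 1$ forces both $x^2 < 1$ and $(1-x)^2 < 1$, the integrand satisfies the single uniform bound
\begin{align}
\bigl(x''(s)\bigr)^2 < v^4
\end{align}
on every subinterval of the partition $0 < t_1 < \cdots < t_n < t$ appearing in \eqref{eq: cumpow2}, except at the finitely many transition times $t_1,\dots,t_n$ where $x''$ is undefined. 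These exceptional points form a set of measure zero and do not affect the value of the integral, which is precisely why the piecewise definition \eqref{eq: cumpow2} was adopted.

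Finally, I would sum the contributions from each subinterval. Each term is bounded above by $\int_{t_{k-1}}^{t_k} v^4\, ds = v^4(t_k - t_{k-1})$, and the interval lengths telescope to $t$. Hence $F(t) < v^4 t$, which establishes $F(t) = \Ot$. The argument is genuinely short; the one point requiring care is the uniform confinement of $x$ to $(0,1)$, since without it the integrand could not be bounded independently of $s$, and the linear estimate would fail. Everything else reduces to integrating a constant bound over a partition of the interval $[0,t]$.
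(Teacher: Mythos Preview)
Your proposal is correct and follows essentially the same argument as the paper: bound $(x'')^2$ uniformly by $v^4$ using the confinement $0<x<1$, then integrate over the partition so that the subinterval lengths telescope to $t$, yielding $F(t) < v^4 t$. The only difference is that you spell out the inductive justification for $0<x(s)<1$ and the measure-zero remark at transition times, whereas the paper simply states the confinement as an assumption on $x(0)$; the structure of the proof is otherwise identical.
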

\begin{proof}
Notice from \eqref{eq: power1} and \eqref{eq: power2} that
\begin{align}
0 < \bigl(x''(s)\bigr)^2 < v^4, \label{eq: statement1}
\end{align}
since
\begin{align}
0 < x(s) < 1 \label{eq: statement2}
\end{align}
for all time $s,$ under the assumption $0 < x(0) < 1$. Thus, we have
\begin{align}
F(t) &= \int_0^{t_1} \bigl(x''(s)\bigr)^2\ ds + \int_{t_1}^{t_2} \bigl(x''(s)\bigr)^2\ ds + \cdots + \int_{t_{n-1}}^{t_n} \bigl(x''(s)\bigr)^2\ ds + \int_{t_{n}}^{t} \bigl(x''(s)\bigr)^2\ ds \\
&< \int_0^{t_1} v^4\ ds + \int_{t_1}^{t_2} v^4\ ds + \cdots + \int_{t_{n-1}}^{t_n} v^4\ ds + \int_{t_{n}}^{t} v^4\ ds \\
&= v^4\bigl[(t_1 - 0) + (t_2 - t_1) + \cdots + (t_n - t_{n-1}) + (t - t_n)\bigr] \\
&= v^4t, \label{eq: proof}
\end{align}
for all time $t.$ Since $v^4$ is constant and cumulative power $F$ is always positive, \eqref{eq: proof} shows $F \in \Ot$.
\end{proof}


\section*{Acknowledgments} \addcontentsline{toc}{section}{Acknowledgments}
This work was completed as part of the Summer Undergraduate Research Institute in Experimental Mathematics (SURIEM) at the Lyman Briggs College of Michigan State University. We thank the sponsorship of the National Security Agency and the National Science Foundation in funding this REU program. We thank Professor Daniel P.~Dougherty for his mentorship, as well as Joseph E.~Roth for his assistance. We also wish to credit Mr.~Roth for his observations leading to \hyperref[conj: joseph]{Conjecture \ref*{conj: joseph}}.


\addcontentsline{toc}{section}{References}

\end{document}